\newcommand{\set}[2]{ \lbrace #1 \ \mid \ #2 \rbrace}
\newcommand{\qcr}[1]{\ulcorner #1 \urcorner}
\newcommand{\PA}{\textnormal{PA}}
\newcommand{\Th}{\textnormal{Th}}
\newcommand{\was}[1]{\{#1\}}
\newcommand{\CT}{\textnormal{CT}}
\newcommand{\KF}{\textnormal{KF}}
\newcommand{\form}{\textnormal{Form}}
\newcommand{\TB}{\textnormal{TB}}
\newcommand{\PT}{\textnormal{PT}}
\newcommand{\UTB}{\textnormal{UTB}}
\newcommand{\pt}{\textnormal{PT}}
\newcommand{\WKF}{\textnormal{WKF}}
\newcommand{\WPT}{\textnormal{WPT}}
\newcommand{\tot}{\textnormal{tot}}
\newcommand{\PTtot}{\PT^- + \intot}
\newcommand{\PTint}{\PT^- + \inter}
\newcommand{\val}[1]{{#1}^{\circ}}
\newcommand{\ConPA}{\textnormal{Con}_{\PA}}
\newcommand{\inter}{\textnormal{INT}}
\newcommand{\num}[1]{\underline{#1}}
\newcommand{\Sent}{\textnormal{Sent}}
\newcommand{\intot}{\textnormal{INT}\upharpoonright_{\tot}}
\newcommand{\df}[1]{\textbf{#1}}
\newcommand{\CC}{\textnormal{CC}}
\newcommand{\GC}{\textnormal{GC}}
\newcommand{\dpt}{\textnormal{dp}}
\newcommand{\ind}{\textnormal{ind}}
\newcommand{\Asn}{\textnormal{Asn}}
\newcommand{\ucl}{\textnormal{ucl}}
\newcommand{\term}{\textnormal{Term}}
\newcommand{\PAT}{\textnormal{PAT}}
\newcommand{\FV}{\textnormal{FV}}
\newcommand{\Safe}{\textnormal{Safe}}
\newcommand{\Lang}{\mathcal{L}}
\newcommand{\M}{\mathcal{M}}
\newcommand{\poprawka}[1]{#1}
\newcommand{\usuniete}[1]{}
\newcommand{\dousuniecia}[1]{}
\newtheorem{tw}{Theorem}
\newtheorem{fakt}[tw]{Fact}
\newtheorem{stwierdzenie}[tw]{Proposition}
\newtheorem{wniosek}[tw]{Corollary}
\newtheorem{lemat}[tw]{Lemma}
\theoremstyle{definition}
\newtheorem{definicja}[tw]{Definition}
\newtheorem{uwaga}[tw]{Remark}
\newtheorem{konwencja}{Convention}
\begin{document}
	\title{Models \poprawka{of} Positive Truth}
	\author{Mateusz Łełyk, Bartosz Wcisło}
	
	\maketitle
	
\begin{abstract}
	This paper is a follow-up to \cite{CWL}. We give a strenghtening of the main result on the semantical non-conservativity of the theory of $\pt^-$ with internal induction for total formulae ($\pt^- + \inter(\tot)$). We show that if to $\pt^-$ the axiom of internal induction for all arithmetical formulae is added ($\pt^- + \inter$), then this theory is semantically stronger than $\pt^- + \inter(\tot)$. In particular the latter is not relatively truth definable (in the sense of \cite{fujimoto}) in the former. Last but not least we provide an axiomatic theory of truth which meets the requirements put forward by Fischer and Horsten in \cite{fischerhorsten}.
\end{abstract}

\section{Introduction}

\subsection{Axiomatic Theories of Truth}

\emph{Axiomatic theories of truth} is a branch of mathematical logic and philosophy which studies the properties of formal theories generated in the following way:

\begin{enumerate}
	\item We take a \emph{base theory} $B$ which we demand to be sufficiently strong to (strongly) represent basic syntactical operations.
	\item We extend the language of $B$ by adding one new unary predicate $T$ and some axioms for it so that the resulting theory $\Th$ prove all sentences of the form
	\[T(\qcr{\phi})\equiv \phi\]
	for $\phi$ \emph{in the language of our base theory $B$}.
\end{enumerate} 
For a brief introduction to the subject see \cite{stanford} and for a more complete one---\cite{halbach}. The big question that this paper answers in a tiny part is the following: how various axioms for the truth predicate influence its \emph{strength}. For the purpose of investigating this question we focus on the truth theories with \emph{Peano Arithmetic} as a base theory. The notion of \textit{strength} may enjoy many different explications. For example, the simplest one is given by inclusion of sets of consequences: we might say that $\Th_1$ is not weaker than $\Th_2$ if and only if $\Th_1$ proves all the axioms of $\Th_2$. For many applications this is too fine-grained: many theories, intuitively differing in strength, become incomparable out of not-that-important reasons (obviously this is not a formal notion). Better adjusted notion was introduced by Kentaro Fujimoto in \cite{fujimoto} and is a special kind of interpretability. We recall the definition:

\begin{definicja}
	\begin{enumerate}
		\item Let $\Th_1$ and $\Th_2$ be axiomatic truth theories and let $T_{\Theta_2}$ be the truth predicate of $\Th_2$. For any sentence $\Theta$ of $\mathcal{L}_{\Th_2}$ and a formula $\phi(x)\in\mathcal{L}_{\Th_1}$ with precisely one free variable let
		\[\Theta^{\phi(x)}\]
		denote the $\mathcal{L}_{\Th_1}$ sentence which results from $\Theta$ by substituting $\phi(x)$ for every occurrence $T_{\Th_2}(x)$ (and renaming variables, if necessary).
		\item We say that $\Th_1$ \emph{relatively truth defines} $\Th_2$ if and only if there exists a formula $\phi(x)\in\mathcal{L}_{\Th_1}$ such that for any axiom $\Theta$ of $\Th_2$
		\[\Th_1\vdash \Theta^{\phi(x)}\]
		If $\Th_2$ relatively truth defines $\Th_1$ we will denote it by $\Th_1\leq_F\Th_2$\footnote{\poprawka{''$F$'' stands for ''Fujimoto.''}}.
	\end{enumerate}
\end{definicja}

	In terms of interpretations, relative truth definability is a $\mathcal{L} _{\PA}$-conservative interpretation between truth theories (for the terminology related to interpretations see e.g. \cite{visser}). It was argued in \cite{fujimoto} that relative truth definability provides a good explication of \textit{epistemological reduction} between truth theories. We may treat it as an explication of a notion of strength: $\Th_1$ is \emph{Fujimoto-stronger} than $\Th_2$ if and only if $\Th_1$ relatively truth defines $\Th_2$ but not \textit{vice versa}. This relation will be denoted by $\lneq_F$. 
	
\subsection{Strength relative to $\PA$}

In some philosophical debates, especially the ones related to the deflationism, the need for a differently oriented formal explication of strength seems to emerge. It has been claimed (most importantly in \cite{shapiro}, \cite{ketland} and \cite{horsten}) that deflationary thesis that truth is a "simple" (aka "light", "metaphysically thin") notion implies that the deflationary theory of truth should be \emph{conservative} over $\PA$.\footnote{This thesis however has been recently criticised at length in \cite{ciesinn}} Let us recall that a theory of truth can be conservative over $\PA$ in two senses:

\begin{definicja}
	Let $\Th$ be a theory of truth. 
	\begin{enumerate}
		\item We say that $\Th$ is \emph{proof-theoretically conservative} over $\PA$ if and only if for every $\phi\in\mathcal{L}_{\PA}$, if $\Th\vdash \phi$, then $\PA\vdash \phi$.\footnote{This property is also called \textit{syntactical conservativity}}.
		\item We say that $\Th$ is \textit{model-theoretically conservative} over $\PA$ if and only if every model $\mathcal{M}$ of $\PA$ admits an \emph{expansion} to a model of $\Th$.\footnote{This relation is also called \textit{semantical conservativity}}.
	\end{enumerate}
\end{definicja}

\begin{uwaga}
	Note that, in the definition of model-theoretical conservativity, we do not merely demand every model to have an \emph{extension} to a model of $\Th$, in which case both notion of conservativity would be the same. We say that $\mathcal{M}'$ is an \emph{expansion} of $\mathcal{M}$ if \poprawka{$\M$ and $\M '$ are the same model, except that $\M '$ carries interpretation of additional function, relation and constant symbols.} 
\end{uwaga}

The two notions lead in the natural way to the following generalisations:

\begin{definicja}
	Let $\Th_1$ and $\Th_2$ be two truth theories.
	\begin{enumerate}
		\item We say that $\Th_1$ is \textit{proof-theoretically not stronger} than $\Th_2$ if every $\mathcal{L}_{\PA}$ sentence provable in $\Th_1$ is provable in $\Th_2$. If $\Th_1$ is proof-theoretically not stronger than $\Th_1$, we will denote it with $\Th_1\leq_{P}\Th_2$.\footnote{$P$ is meant to abbreviate "Proof".}
		\item We say that $\Th_1$ is \textit{model-theoretically not stronger} than $\Th_2$ if every model which can be expanded to a model of $\Th_2$, can be expanded to a model of $\Th_1$. If $\Th_1$ is syntactically not stronger than $\Th_1$, we will denote it with $\Th_1\leq_{M}\Th_2$.\footnote{$M$ is meant to abbreviate "Model".}
	\end{enumerate}
\end{definicja}

Obviously we say that $\Th_2$ is proof-theoretically (model-theoretically) stronger than $\Th_1$ if $\Th_1\leq_P \Th_2$ but $\Th_2\nleq_P\Th_1$ (respectively, $\Th_1~\leq_M~\Th_2$ but $\Th_2~\nleq_M~\Th_1$). This relation will be denoted $\lneq_P$ ($\lneq_M$ respectively). 

Let us observe that the three notions of strength introduced above can be ordered with respect to their "granularity". Indeed, for any theories $\Th_1$ and $\Th_2$ we have:
\begin{equation}\nonumber\label{strength1}\tag{$FMP$}
\Th_1\leq_F \Th_2 \implies \Th_1\leq_M\Th_2 \implies  \Th_1\leq_P\Th_2.
\end{equation}
Hence also
\begin{equation}\nonumber\label{strength2}\tag{$\neg PMF$}
\Th_2\nleq_P \Th_1 \implies \Th_2\nleq_M\Th_1 \implies  \Th_2\nleq_P\Th_2
\end{equation}
Having three different notions of strength makes it possible to decide not only whether one theory of truth is stronger than another one, but also \emph{how much} stronger it is.

\subsection{Compositional Positive Truth and its Extensions}
Before continuing let us introduce some handy notational conventions:
\begin{konwencja}\text{}
	\begin{enumerate}
	\item By using variables $\phi, \psi$ we implicitly restrict quantification to (G\"odel codes of) arithmetical sentences. For example, by $\forall \phi \ \ \Psi( \phi)$ we mean $\forall x \ \ \bigl(\Sent(x) \rightarrow \Psi(x)\bigr)$ and by $\exists \phi \ \ \Psi(\phi)$ we mean $\exists x \ \ \bigl(\Sent(x) \wedge \Psi(x)\bigr).$ For brevity, we will sometimes also use variables $\phi, \psi$ to run over arithmetical formulae, whenever it is clear from the context which one we mean; similarly
	\begin{enumerate}
	\item $\phi(v), \psi(v)$ run over arithmetical formulae with at most one indicated free variable (i.e. $\phi(v)$ is either a formula with exactly one free variable or a sentence); $\phi(\bar{x})$, $\psi(\bar{x})\ldots$ run over arbitrary arithmetical formulae. 
	\item  $s,t$ run over codes of closed arithmetical terms;
	\item $v,v_1,v_2, \ldots, w, w_1, w_2, \ldots$ run over codes of variables;
	\end{enumerate}
	\item $\form_{\Lang_{\PA}}(x)$, $\form^{\leq 1}_{\Lang_{\PA}}(x)$, $\Sent_{\Lang_{\PA}}(x)$ are natural arithmetical formulae strongly representing in $\PA$ the sets of (G\"odel codes of) formulae of $\Lang_{\PA}$, formulae  of $\Lang_{\PA}$ with at most one free variable, sentences of $\Lang_{\PA}$, respectively.
	\item if $\phi$ is a $\Lang_{\PA}$ formula, then $\qcr{\phi}$ denotes either its G\"odel code or the numeral denoting the G\"odel code of $\phi$ (context-dependently). This is the unique way of using $\qcr{\cdot}$ in this paper.
	\item  to enhance readability we suppress the formulae representing the syntactic operations. For example we write $\Phi(\psi \wedge \eta)$ instead of $\Phi(x) \wedge$ "$x$ is the conjunction of $\psi$ and $\eta$", similarly, we write $\Phi(\psi(t))$ instead of $\Phi(x) \wedge x = \textnormal{Subst}(\psi,t)$;
	\item $\num{x}$ denotes the (G\"odel code of) standard numeral for $x$, i.e. $\qcr{\underbrace{S\ldots S(0)}_{x \textnormal{ times }S}}$
	\item $\val{y}$ is the standard arithmetically definable function representing the value of term (coded by) $y$.
	\end{enumerate}
\end{konwencja}

The main objective of this study is to measure the strength of theories that are compositional, but do not enjoy the global axiom for commutativity with the negation, i.e.
\begin{equation}\tag{NEG}\label{equat_neg}
\forall \phi \ \ \bigl(T(\neg\phi)\equiv \neg T(\phi)\bigr)
\end{equation}
Let us formulate the theories which will be of the main interest.

\begin{definicja}
	$\pt^-$ is the axiomatic truth theory with the following axioms for the truth predicate:
	\dousuniecia{Hej, to majo być te kornery, czy nie majo? Usunąłem, ale nie jestem pewien czy słusznie.}
	\begin{enumerate}
		\item 
		\begin{enumerate}
			\item $\forall s,t \ \ \bigl(T(s=t) \equiv (\val{s}=\val{t})\bigr)$
			\item  $\forall s,t \ \ \bigl(T(\neg s=t) \equiv (\val{s}\neq\val{t})\bigr)$
		\end{enumerate}

		\item \begin{enumerate}
			\item $\forall \phi,\psi \ \ \bigl(T(\phi\vee \psi)\equiv T(\phi)\vee T(\psi)\bigr)$
			\item $\forall \phi,\psi \ \ \bigl(T(\neg (\phi\vee \psi))\equiv T(\neg\phi)\wedge T(\neg\psi)\bigr)$
		\end{enumerate}
		\item \begin{enumerate}
			\item $\forall v \forall \phi(v)\ \ \bigl(T(\exists v\phi)\equiv \exists x\ \ T(\phi(\num{x}))\bigr)$
			\item $\forall v\forall \phi(v)\ \ \bigl(T(\neg\exists v\phi)\equiv \forall x\ \ T(\neg\phi(\num{x}))\bigr)$
		\end{enumerate}
		
		\item $\forall \phi\ \ \bigl(T(\neg\neg\phi)\equiv T(\phi)\bigr)$
		\item $\forall \phi\forall s,t\Bigl(\val{s}=\val{t}\rightarrow \Big(T(\phi(s))\equiv T(\phi(t))\Big)\Bigr)$
	\end{enumerate}
\end{definicja}

In the arithmetized language, we treat $\wedge$ and $\forall$ as symbols defined contextually, i.e. $\phi\wedge \psi = \neg(\neg\phi\vee\neg\psi)$ and $\forall v\phi = \neg\exists v\neg\phi$. Then it is easy to check that the following sentences are provable in $\pt^-$:
\begin{enumerate}
	\item $\forall \phi,\psi \left(T(\phi\wedge\psi)\equiv \left(T(\phi)\wedge T(\psi)\right)\right)$.
	\item $\forall \phi,\psi \left(T\left(\neg\left(\phi\wedge\psi\right)\right)\equiv \left(T\left(\neg\phi\right)\vee T\left(\qcr{\neg\psi}\right)\right)\right)$.
	\item $\forall v\forall \phi(v) \left(T(\forall v\phi)\equiv \left(\forall x\ \ T(\phi(\num{x})\right)\right)$
	\item $\forall v\forall \phi(v) \left(T(\neg\forall v\phi)\equiv \left(\exists x\ \ T(\neg\phi(\num{x})\right)\right)$
\end{enumerate} 

In $\pt^-$ the internal logic (i.e. the logic of all \emph{true} sentences) is modelled after the Strong Kleene Scheme. Let us observe that axioms of $\pt^-$ make it possible to accept a disjunction $\phi\vee \psi$ as true simply on the basis of the truth of one of $\phi$ and $\psi$ and regardless of whether the second one has its truth value determined. The second theory we will study is more cautious in this respect. Let us define 
\[\tot(\phi(v)) := \form^{\leq 1}(\phi(v))\wedge \forall x \bigl( T(\phi(\num{x}))\vee T(\neg\phi(\num{x}))\bigr)\]
In particular if $\psi$ is a sentence, then 
$$\PAT^-\vdash\tot(\psi) \equiv \biggl(T(\psi)\vee T(\neg\psi)\biggr).$$
where $\PAT^-$ is the extension of $\PA$ in $\mathcal{L}_{\PA}\cup \was{T}$, with no non-logical axioms for $T$.
\begin{definicja}
	$\WPT^-$ is the axiomatic truth theory with the following axioms for the truth predicate:
	\begin{enumerate}
		\item 
		\begin{enumerate}
			\item $\forall s,t \ \ \bigl(T(s=t) \equiv (\val{s}=\val{t})\bigr)$
			\item  $\forall s,t \ \ \bigl(T(\neg s=t) \equiv (\val{s}\neq\val{t})\bigr)$
		\end{enumerate}
		
		\item \begin{enumerate}
			\item $\forall \phi,\psi \ \ \bigl(T(\phi\vee \psi)\equiv \bigl(\tot(\phi)\wedge \tot(\psi) \wedge \bigl(T(\phi)\vee T(\psi)\bigr)\bigr)$
			\item $\forall \phi,\psi \ \ \bigl(T(\neg (\phi\vee \psi))\equiv T(\neg\phi)\wedge T(\neg\psi)\bigr)$
		\end{enumerate}
		\item \begin{enumerate}
			\item $\forall v \forall \phi(v)\ \ \bigl(T(\exists v\phi)\equiv \tot(\phi(v))\wedge \exists x T(\phi(\num{x}))\bigr)$
			\item $\forall v\forall \phi(v)\ \ \bigl(T(\neg\exists v\phi)\equiv \forall x\ \ T(\neg\phi(\num{x}))\bigr)$
		\end{enumerate}
		\item $\forall \phi\ \ \bigl(T(\neg\neg\phi)\equiv T(\phi)\bigr)$
		\item $\forall \phi\forall s,t\bigl(\val{s}=\val{t}\rightarrow T(\phi(s))\equiv T(\phi(t))\bigr)$
	\end{enumerate}
\end{definicja}

Using the above mentioned conventions regarding $\wedge$ and $\forall$, it is an easy exercise to show that the following sentences are provable in $\WPT^-$:
\begin{enumerate}
		\item $\forall \phi,\psi \left(T(\phi\wedge\psi)\equiv \left(T(\phi)\wedge T(\psi)\right)\right)$.
		\item $\forall \phi,\psi \left(T\left(\neg\left(\phi\wedge\psi\right)\right)\equiv \left(\tot(\phi)\wedge\tot(\psi) \wedge \left(T\left(\neg\phi\right)\vee T\left(\neg\psi\right)\right)\right)\right)$.
		\item $\forall v\forall \phi(v) \left(T(\forall v\phi)\equiv \left(\forall x\ \ T(\phi(\num{x})\right)\right)$
		\item $\forall v\forall \phi(v) \left(T(\neg\forall v\phi)\equiv \left(\tot(\phi(v))\wedge\exists x\ \ T(\neg\phi(\num{x}))\right)\right)$
\end{enumerate}
In $\WPT^-$ the internal logic is modelled after the \emph{Weak Kleene Scheme}. $(\textnormal{W})\pt^-$ can be seen as a natural stratified counterpart of $(\textnormal{W})\KF^-$\footnote{For the definition of all mentioned theories \poprawka{not defined in this paper,} consult \cite{halbach} or \cite{fujimoto} (for $\WKF$).} Since in particular $(\textnormal{W})\PT^-$ is a subtheory of $(\textnormal{W})\KF^-$ and the latter are well known to be model-theoretically conservative over $\PA$ (see \cite{cant}; we will outline direct proof of model-theoretical conservativity of $\pt^-$ in Section \ref{sect_int}), we have
\begin{stwierdzenie}
	$\pt^-$ and $\WPT^-$ are model-theoretically conservative.
\end{stwierdzenie}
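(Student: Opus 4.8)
The plan is to give two arguments: the immediate one that the Proposition's placement invites, and the ``direct'' one announced for Section~\ref{sect_int}. For the immediate route I would simply exploit that $\pt^-$ and $\WPT^-$ are subtheories of $\KF^-$ and $\WKF^-$, together with the stated fact that $(\textnormal{W})\KF^-$ are model-theoretically conservative over $\PA$ (cf.\ \cite{cant}). Model-theoretic conservativity is inherited downward along the subtheory relation: if every $\mathcal{M}\models\PA$ expands to a model of a larger theory $\Th'$ and every axiom of $\Th$ is provable in $\Th'$, then that very expansion already satisfies $\Th$. So, fixing $\mathcal{M}\models\PA$, expand it to $(\mathcal{M},T)\models\KF^-$ (resp.\ $\WKF^-$); since each axiom of $\pt^-$ (resp.\ $\WPT^-$) is a theorem of $\KF^-$ (resp.\ $\WKF^-$), the same $(\mathcal{M},T)$ witnesses the claim, with no new construction needed.

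For the self-contained proof I would build the interpretation of $T$ as a Kripke-style least fixed point. Given $\mathcal{M}\models\PA$, consider the complete lattice $\mathcal{P}(\Sent^{\mathcal{M}})$ of subsets of the (possibly nonstandard) coded sentences and define a jump operator $\Gamma$ by reading the right-hand sides of the $\pt^-$ axioms as a recursion: $\sigma\in\Gamma(S)$ iff $\sigma$ is $s=t$ with $\val{s}=\val{t}$, or $\neg(s=t)$ with $\val{s}\neq\val{t}$, or $\phi\vee\psi$ with $\phi\in S$ or $\psi\in S$, or $\neg(\phi\vee\psi)$ with $\neg\phi,\neg\psi\in S$, or $\exists v\,\phi$ with $\phi(\num{x})\in S$ for some $x$, or $\neg\exists v\,\phi$ with $\neg\phi(\num{x})\in S$ for all $x$, or $\neg\neg\phi$ with $\phi\in S$. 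Since every coded sentence falls under exactly one of these cases (an arithmetical unique-readability fact available in $\mathcal{M}$) and each clause is positive in $S$, the operator $\Gamma$ is monotone, and Knaster--Tarski (equivalently, transfinite iteration from $\emptyset$) yields a $T$ with $\Gamma(T)=T$. The fixed-point identity is, clause by clause, exactly the list of biconditionals 1a--4 of $\pt^-$, and since $\pt^-$ imposes no global negation axiom, \emph{any} fixed point already gives a model. For $\WPT^-$ I would use the analogous Weak Kleene operator, inserting the side condition $\tot_S$ (``$\forall x\,(\phi(\num x)\in S\vee\neg\phi(\num x)\in S)$'') into the disjunction and existential clauses; here one only has to observe that $\tot_S$ is itself monotone in $S$, so the modified operator remains monotone.

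The main obstacle, and the reason the direct proof is not quite routine, is twofold. First, one must verify the regularity axiom~5, $\val{s}=\val{t}\rightarrow\bigl(T(\phi(s))\equiv T(\phi(t))\bigr)$, which is not literally a fixed-point clause; I would secure it by making $\Gamma$ invariant under replacing any closed term by a value-equal closed term. The atomic clauses already depend only on term values, and substitution pushes this invariance through the compound clauses, so the least fixed point is a union of such term-normalization equivalence classes, which is precisely axiom~5. Second, because $\Sent^{\mathcal{M}}$ contains nonstandard sentences of nonstandard depth, these closure properties cannot be established by external induction on syntactic complexity; instead one argues internally, using that the required structural facts about unique readability and substitution are provable in $\PA$ and hence hold in $\mathcal{M}$, while the existence of the fixed point is an external, purely order-theoretic consequence of monotonicity. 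Assembling $(\mathcal{M},T)\models\pt^-$ (resp.\ $\WPT^-$) then completes the argument.
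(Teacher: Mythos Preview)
Your proposal is correct and matches the paper. The paper's entire proof of this proposition is the sentence immediately preceding it---the subtheory argument you give first---and your direct fixed-point construction is exactly the operator $\Gamma^{\mathcal{M}}$ the paper later introduces in Definition~\ref{defin_oper_theta}; your explicit attention to axiom~5 (term regularity) and to the nonstandard-depth issue is in fact more careful than the paper's own presentation, which simply asserts without further comment that any fixed point of $\Gamma^{\mathcal{M}}$ models $\PT^-$.
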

In particular we see that the axiom \eqref{equat_neg} may contribute to the strength of truth theories: it is easy to see that $(W)\pt^- + \eqref{equat_neg}$ is deductively equivalent to the theory $\CT^-$, hence in particular by the well-known theorem of Lachlan (see \cite{halbach},\cite{kaye}) it is not semantically conservative.

For the sake of convenience let us isolate one easily noticeable feature of $\pt^-$ and $\WPT^-$:

\begin{definicja}[$\UTB$]
	Let $\phi(x_0,\ldots,x_n)$ be any arithmetical formula. $\UTB^-(\phi)$ is the following $\Lang_T$ sentence
	\begin{equation}\label{UTB}\tag{$\UTB^-(\phi)$}
	\forall t_0\ldots t_n\ \  \bigl(T(\qcr{\phi(t_0,\ldots, t_n)})\equiv \phi(\val{t_0},\ldots,\val{t_n})\bigr)
	\end{equation}
	Define 
	\[\UTB^- := \set{\UTB^-(\phi(x_0,\ldots,x_n))}{\phi(x_0,\ldots,x_n)\in\Lang_{\PA}}\]
	And $\UTB$ to be the extensions of $\UTB^-$ with all instantiations of induction scheme with $\Lang_T$ formulae.
\end{definicja}

\begin{fakt}\label{utb_w_pt_wpt}
	Both $\PT^-$ and $\WPT^-$ prove $\UTB^-$.
\end{fakt}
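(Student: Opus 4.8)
The plan is to prove, for each fixed standard arithmetical formula $\phi(x_0,\dots,x_n)$, the sentence $\UTB^-(\phi)$ by external (meta-level) induction on the syntactic build-up of $\phi$. Because the negation axiom \eqref{equat_neg} is absent from $\PT^-$ and $\WPT^-$, a naive induction on $\phi$ alone will not close: the compositional clauses are stated in a positive, De~Morgan-style form, so to evaluate a negated disjunction or a negated quantifier I need compositional information about the negations of the immediate subformulae. I would therefore strengthen the induction hypothesis and prove simultaneously the two sentences $\UTB^-(\phi)$ and $\UTB^-(\neg\phi)$, i.e. that $\PT^-$ (resp. $\WPT^-$) proves $\forall \bar t\,(T(\qcr{\phi(\bar t)}) \equiv \phi(\val{\bar t}))$ together with $\forall \bar t\,(T(\qcr{\neg\phi(\bar t)}) \equiv \neg\phi(\val{\bar t}))$.

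In the base case $\phi$ is an equation $r(\bar x) = q(\bar x)$; substituting closed terms and invoking axioms 1(a) and 1(b), the two biconditionals reduce to the $\PA$-provable fact that the value of a substituted term equals the term function applied to the values, i.e. $\val{r(\bar t)} = r(\val{t_0},\dots,\val{t_n})$. For the inductive step I would split on the outermost symbol of $\phi$: if $\phi = \psi\vee\eta$ I use axiom 2(a) to compute $T(\qcr{\phi(\bar t)})$ together with the hypotheses $\UTB^-(\psi),\UTB^-(\eta)$, while axiom 2(b) with $\UTB^-(\neg\psi),\UTB^-(\neg\eta)$ handles $\UTB^-(\neg\phi)$; the existential case $\phi=\exists v\,\psi$ is analogous via axioms 3(a) and 3(b), where passing through numerals $\num{x}$ is harmless since $\val{\num{x}}=x$. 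The one genuinely new case is $\phi = \neg\chi$: here $\UTB^-(\phi)$ is already part of the hypothesis $\UTB^-(\neg\chi)$ carried for $\chi$, and $\UTB^-(\neg\phi)=\UTB^-(\neg\neg\chi)$ follows from the double-negation axiom 4, $T(\qcr{\neg\neg\chi}) \equiv T(\qcr{\chi})$, combined with $\UTB^-(\chi)$. (The term-regularity axiom 5 is what lets me freely replace closed terms by the numerals of their values wherever convenient.)

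For $\WPT^-$ the only modification is that axioms 2(a) and 3(a) carry extra totality conjuncts $\tot(\cdot)$. The key observation is that these are automatically discharged for standard subformulae: by the simultaneous hypothesis, for standard $\psi$ one has $T(\qcr{\psi(\num{x},\bar t)}) \equiv \psi(x,\val{\bar t})$ and $T(\qcr{\neg\psi(\num{x},\bar t)}) \equiv \neg\psi(x,\val{\bar t})$ for every $x$, so $\tot(\psi(v,\bar t))$ unfolds to $\forall x\,(\psi(x,\val{\bar t}) \vee \neg\psi(x,\val{\bar t}))$, a theorem of $\PA$ by classical excluded middle (and the $\form^{\leq 1}$-clause of $\tot$ holds because $\psi$ is a genuine formula with at most the displayed free variable). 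Once the $\tot$-conjuncts are seen to be provable, the $\WPT^-$ disjunction and existential clauses collapse to exactly the $\PT^-$ computations, and the induction proceeds as before.

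The main obstacle, and the only place needing real care, is precisely the bookkeeping forced by the missing negation axiom: one must check that the joint hypothesis on $\phi$ and $\neg\phi$ is preserved by every clause, with the double-negation axiom doing the essential work in the negation-outermost case. For $\WPT^-$ the secondary subtlety is confirming that totality of standard formulae is a \emph{consequence} of the hypothesis rather than an extra assumption; both are routine once the simultaneous formulation is fixed.
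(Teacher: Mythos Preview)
Your argument is correct and is precisely the standard route: simultaneous external induction on the build-up of the standard formula $\phi$, proving $\UTB^-(\phi)$ and $\UTB^-(\neg\phi)$ together so that the De~Morgan-style negative clauses 2(b), 3(b) and the double-negation axiom 4 can be used in place of the missing global negation axiom, with axiom 5 handling term substitution and, in the $\WPT^-$ case, the induction hypothesis itself discharging the $\tot$-conjuncts via excluded middle. The paper does not spell out a proof of this fact at all---it is stated without argument as a routine observation---so your write-up is exactly the elaboration one would expect.
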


In \cite{fischer2},\cite{fischer3} and \cite{fischerhorsten} (this last philosophical motivation was summarized also in \cite{CWL}) authors motivated the need for a weak theory of truth which would be able to prove in a single sentence the fact that every arithmetical formula satisfy the induction scheme. Such a fact can be naturally expressed by an $\mathcal{L}_T$ sentence
\begin{equation}\label{equat_ind}\tag{\textnormal{INT}}
\forall \phi(x)\ \ \biggl[\bigl(\forall x \bigl(T(\phi(\num{x}))\rightarrow T(\phi(\num{x+1})\bigr)\bigr)\longrightarrow \bigl(T(\phi(0))\rightarrow \forall x T(\phi(\num{x}))\bigr)\biggr]
\end{equation}
For further usage let us abbreviate the formula
\[\bigl(\forall x \bigl(T(\phi(\num{x}))\rightarrow T(\phi(\num{x+1})\bigr)\bigr)\longrightarrow \bigl(T(\phi(0))\rightarrow \forall x T(\phi(\num{x}))\bigr)\]
by $\inter(\phi(x))$. Using Fact \ref{utb_w_pt_wpt}, we see that both $\PT^- + \inter$ and $\WPT^- + \inter$ can prove any arithmetical instance of the induction schema in a uniform way, for each formula using the same finitely many axioms\footnote{The proof is really easy: we fix $\phi(x)$ (with parameters), prove the instantiation of the \ref{UTB} scheme for $\phi$ and substitute $\phi(x)$ for $T(\phi(\num{x}))$ in \ref{equat_ind}}. In particular, it can be finitely axiomatised by taking $I\Sigma_1$ together with axioms for the truth predicate from $\pt^-$ and $\eqref{equat_ind}$. To achieve this goal, however, none of the discussed theories uses the full strength of \eqref{equat_ind}.
By \ref{UTB} every standard formula is total, provably in $\WPT^-$. Hence it makes good sense to consider a version of \eqref{equat_ind} restricted to total formulae, i.e.

\begin{equation}\label{equat_int_tot}\tag{$\intot$}
\forall \phi(v)\ \ \biggl[\tot(\phi(v))\longrightarrow \inter(\phi(v))\biggr]
\end{equation}

The theory $\pt^- + \intot$ was claimed to be model-theoretically conservative in \cite{fischer} (and then used in \cite{fischer2},\cite{fischer3} and \cite{fischerhorsten} as such). However, as shown in \cite{CWL}, the proof of its conservativity contained an essential gap and no prime model of $\PA$\footnote{For the definition of all notions from the model theory of $\PA$ see \cite{kaye}.} admits an expansion to the model of $\PTtot$. Moreover, it was shown that every recursively saturated model of $\PA$ can be expanded to a model of this theory. In particular, $\pt^-+\intot$ is model-theoretically stronger than $\pt^-$ and weaker than $\UTB$ and $\CT^-$. 

In the current study, we further approximate the class of models expandable to $\pt^-+\intot$ and compare the strength of $\UTB$ with the strength of $\pt^- + \inter$. Moreover we show that $\WPT^- + \inter$ is model theoretically conservative and meets the requirements posed in \cite{fischerhorsten}. Our results jointly with some well-known facts from the literature give the following picture of interdependencies between proof-theoretically conservative theories of truth:

\begin{center}
\begin{tikzcd}
	 & \CT^- + \inter &\\
\CT^-\ar[ur, Rightarrow]&      \overset{?}{\Longleftrightarrow}     & \pt^-+\inter\ar[ul, Rightarrow]\\
 & \UTB\ar[ur, Rightarrow]\ar[ul, Rightarrow] &\\
 & \PTtot\ar[u] &\\
 & \TB\ar[u]&\\
\pt^-\ar[ur]& \Longleftrightarrow & \WPT^- +  \inter\ar[ul]
\end{tikzcd} 
\end{center}
\bigskip
\dousuniecia{Ale profeska.}
where $\longrightarrow$ stands for $\lneq_M$ and $\Longrightarrow$ for $\leq_M$. The question whether any of $\Longrightarrow$ arrows is in fact a $\longrightarrow$ arrow is open. Similarly, the relation between classes of models of $\CT^-$ and $\PT^- + \inter$ is unknown.

\section{Models of $\pt^- + \intot$}\label{sect_tot}

In the paper \cite{CWL}, it has been shown that $\PTtot$ is not semantically conservative over $\PA$ and, moreover, any (not necessarily countable) recursively saturated model of $\PA$ admits an expansion to a model of $\PTtot$. The nonconservativity result has been obtained by demonstrating that no prime model of $\PA$ can be expanded in such a way. Now, we will show a strengthening of that result. Let us first recall one definition.

\begin{definicja}
Let $M$ be a model of $\PA$. We say that $M$ is \df{short recursively saturated} if any recursive type (with finitely many parametres from $M$) of the form $p(x) = \set{x<a \wedge \phi_i(x)}{i \in \mathbb{N}}$ is realised in $M$ where $a$ is some fixed parameter from $M$.
\end{definicja}


In other words, a model is short recursively saturated if it realises all types which are finitely realised \emph{below} some fixed element. This notion is strictly weaker than full recursive saturation. For example, the standard model $\mathbb{N}$ is short recursively saturated but not recursively saturated. More generally, a countable model is short recursively saturated if and only if it has a recursively saturated elementary end extension, see \cite{smorynski}, Theorem 2.8.


\begin{tw}\label{tw_srecsat_to_petetot}
 Let $M \models \PA$ and suppose that $M$ has an expansion $(M,T) \models \PTtot$. Then $M$ is short recursively saturated.
\end{tw}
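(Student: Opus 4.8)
The plan is to fix a recursive type $p(x)=\set{x<a\wedge\phi_i(x)}{i\in\mathbb{N}}$ that is finitely realised below the parameter $a$ in $M$, and to produce an element of $M$ realising it, working throughout in the expansion $(M,T)\models\PTtot$. The guiding idea is an overspill argument carried out \emph{below} $a$: I will isolate a property, definable with the help of $T$, that holds of every standard index $i$, and show that it must persist to some nonstandard index $b$; from such a $b$ I will then read off a single witness $c<a$ satisfying $\phi_i$ for all standard $i$. The restriction of the type to elements below the fixed $a$ is exactly what makes the overspill feasible with the limited induction available, and is the feature that yields \emph{short} rather than full recursive saturation.

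First I would use the recursiveness of $\{\phi_i\}$ to fix an arithmetical function $i\mapsto\qcr{\chi_i(v)}$, where $\chi_i(v)$ is the code of the formula $v<\num{a}\wedge\bigwedge_{j\le i}\phi_j(v)$; since this function is arithmetically definable, the codes $\qcr{\chi_i(v)}$ exist in $M$ for every $i\in M$, standard or not. By Fact~\ref{utb_w_pt_wpt} the theory proves $\UTB^-$, so for each \emph{standard} $i$ finite realisability below $a$ gives $T(\qcr{\exists v\,\chi_i(v)})$, and conversely any witness of a true existential can be extracted using the positive existential axiom $T(\exists v\phi)\equiv\exists x\,T(\phi(\num{x}))$. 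The compositional axioms for conjunction, together with internal induction for total formulae, are then used to show that from the truth of a (possibly nonstandard) conjunction $\chi_b(\num{c})$ one recovers $T(\qcr{\phi_j(\num{c})})$ for every $j\le b$; the induction here is legitimate because each standard $\phi_j$, and hence the relevant approximations, are total by $\UTB^-$.

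The decisive and hardest step is to secure a \emph{nonstandard} index $b$ for which the existential statement remains true, equivalently to push the downward-closed, $T$-definable set $\set{i}{T(\qcr{\exists v\,\chi_i(v)})}$ past all standard numbers. Ordinary overspill is unavailable, since $\PTtot$ does not prove induction for $\mathcal{L}_T$-formulae; this is precisely the point where the conservativity proof of \cite{fischer} broke down. Instead I would encode the relevant approximation as a single \emph{total} arithmetical formula and feed it to the axiom $\intot$, using the bound $a$ to keep all witnesses inside the fixed interval $[0,a)$, so that the resulting instance $\inter(\phi(v))$ delivers the required overspill confined below $a$. Once a nonstandard $b$ is obtained, the extraction of the previous paragraph yields $c<a$ with $T(\qcr{\phi_j(\num{c})})$ for all $j\le b$, hence for all standard $j$; applying $\UTB^-$ once more gives $M\models\phi_j(c)$ for every standard $j$, so $c$ realises $p$ and $M$ is short recursively saturated. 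I expect essentially all of the difficulty to be concentrated in manufacturing the total arithmetical formula whose $\intot$-instance simulates overspill below $a$, since this is where the weak internal induction of $\PTtot$ must be made to do the work that full $\mathcal{L}_T$-induction cannot.
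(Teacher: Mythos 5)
Your overall strategy --- realise the type by applying $\intot$ to a suitable total formula and reading a witness below $a$ off the conclusion --- is the same as the paper's, but the proposal omits the one step that carries all the mathematical content, and the step you do sketch is set up in a way that would not work as stated. First, the set $\set{i}{T(\qcr{\exists v\,\chi_i(v)})}$ is not the extension of a single arithmetical formula in the variable $i$: the code $\qcr{\chi_i}$ varies with $i$, so ``$T(\qcr{\exists v\,\chi_i(v)})$'' is an $\Lang_T$-condition on $i$, whereas $\intot$ applies only to a (code of a) single arithmetical formula $\phi(v)$, with $T(\phi(\num{i}))$ playing the role of membership. Second, even after packaging the family into one nonstandard formula, progressiveness in the index $i$ fails for the natural candidate: a witness $c$ for $\chi_i$ need not satisfy $\phi_{i+1}$, and producing a fresh witness at a nonstandard stage is exactly the point at issue, so $T(\phi(\num{i}))\rightarrow T(\phi(\num{i+1}))$ cannot be verified. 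Third, totality is not free: for $\intot$ to fire you must establish $T(\phi(\num{x}))\vee T(\neg\phi(\num{x}))$ for \emph{every} $x$, and for a nonstandard Boolean combination this is precisely what $\PT^-$ does not give you in general.

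The paper resolves all three difficulties at once with Smith's alternative with stopping condition (Definition \ref{df_alternatives_stopping} and Lemma \ref{lem_total_stopping_disjunction}): one introduces formulae $\alpha_j(x)$ measuring how much of the type $x$ realises and $\beta_j(y)$ asserting that $y$ realises one step more, forms the single nonstandard formula $\phi(x,y)=\bigvee_{i=0}^{c,\alpha(x)}\beta_i(y)$, and notes that omission of the type forces the least $j$ with $T\alpha_j(\num{x})$ to be standard, so that Lemma \ref{lem_total_stopping_disjunction} yields totality. Crucially, the internal induction is then run not on the index of the type but on $\psi(z)=\exists y\,\forall x<z\;\phi(x,y)$, whose progressiveness is provable: given a witness $y$ for $z$ and the standard least $j$ with $T\alpha_j(\num{z})$, finite realisability supplies $y'$ with $T\beta_j(\num{y'})$, and whichever of $y,y'$ satisfies more of the type works for $z+1$. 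Evaluating at $z=a$ then produces the realising element. Since you explicitly concede that ``essentially all of the difficulty'' lies in manufacturing the total, progressive formula and you do not exhibit one, the proposal as it stands is a statement of intent rather than a proof.
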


The proof of our theorem will closely parallel the proof of Theorem 4 from \cite{CWL}. In particular, we will again use a propositional construction invented by Smith.

\begin{definicja} \label{df_alternatives_stopping}
 Let $(\alpha_i)_{i\leq c}, (\beta_i)_{i \leq c}$ be any sequences of sentences. We define the \df{alternative with stopping condition} $(\alpha_i)$ 
 \begin{displaymath}
  \bigvee_{i=i_0}^{c,\alpha} \beta_i
 \end{displaymath}
 by backwards induction on $i_0$ as follows:
 \begin{enumerate}
  \item $\bigvee_{i=c}^{c,\alpha} \beta_i = \alpha_c \wedge \beta_c.$
  \item $\bigvee_{i=k}^{c,\alpha} \beta_i = \neg (\alpha_k \wedge \neg \beta_k) \wedge \Bigl( (\alpha_k \wedge \beta_k) \vee \big(\neg \alpha_k \wedge \bigvee_{i=k+1}^{c,\alpha} \beta_i \big) \Bigr).$
 \end{enumerate}
\end{definicja}
We may think that this is a formalisation in propositional logic of the following instruction: for $i$ from $i_0$ up to $c$, search for the first number $j$ such that $\alpha_j$ holds and then check whether also $\beta_j$ holds. \emph{Then stop your search}. The whole formula is true if this $\beta_j$ is true and is false if either $\beta_j$ is false or there is no $j$ such that $\alpha_j$ holds. It turns out that this intuition may be partially recovered in theories of truth, even if one does not assume that the truth predicate satisfies induction axioms.

\begin{lemat} \label{lem_total_stopping_disjunction}
 Fix $(M,T) \models \PT^-.$ Suppose that $(\alpha_i)_{i \leq c}$ is a sequence of arithmetical sentences coded in $M$. Suppose that the least $j$ such that $T(\alpha_j)$ holds is standard, say $j=j_0$, and that for any $k \leq  j_0$ either $T\left( \beta_k\right)$ or $T \left(\neg \beta_k\right)$ holds. Then 
 \begin{enumerate}
  \item $(M,T) \models T \Bigl(\bigvee_{i=c}^{c,\alpha} \beta_i \Bigr) \equiv T \left(\beta_{j_0}\right).$
  \item $(M,T) \models T \Bigl(\neg \bigvee_{i=c}^{c,\alpha} \beta_i \Bigr) \equiv T \left( \neg \beta_{j_0}\right).$
 \end{enumerate}

\end{lemat}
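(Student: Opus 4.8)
The plan is to prove both parts simultaneously by backwards induction on $i_0$, the starting index of the stopping alternative $\bigvee_{i=i_0}^{c,\alpha}\beta_i$, ranging from $i_0 = j_0$ down to... wait, actually the cleanest approach runs the induction from $i_0 = 0$ up the recursion, but since the definition is by backwards induction on $i_0$, I would instead prove the statement for $\bigvee_{i=i_0}^{c,\alpha}\beta_i$ for each standard $i_0 \leq j_0$ by downward induction on $i_0$. Let me reconsider what I'd actually do.

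The plan is to prove, by backwards induction on the lower index $k$ running from $j_0$ down to $0$, the following strengthening of the statement: for every standard $k\le j_0$,
\[
T\Bigl(\textstyle\bigvee_{i=k}^{c,\alpha}\beta_i\Bigr)\equiv T(\beta_{j_0})
\quad\text{and}\quad
T\Bigl(\neg\textstyle\bigvee_{i=k}^{c,\alpha}\beta_i\Bigr)\equiv T(\neg\beta_{j_0}).
\]
The lemma is then the bottom instance $k=0$, i.e.\ the full alternative. Writing $D_k:=\bigvee_{i=k}^{c,\alpha}\beta_i$, I would freely use the compositional clauses of $\PT^-$ for disjunction (and their derived conjunction counterparts listed after the definition of $\pt^-$) together with the double negation axiom $T(\neg\neg\phi)\equiv T(\phi)$, and the fact that $T(\alpha_{j_0})$ holds while $T(\alpha_k)$ fails for every $k<j_0$.

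The anchor is the case $k=j_0$, which splits according to whether $j_0=c$ or $j_0<c$. If $j_0=c$ then $D_{j_0}=\alpha_c\wedge\beta_c$, and expanding $T(D_{j_0})$ and $T(\neg D_{j_0})$ by the conjunction clauses reduces them to $T(\beta_{j_0})$ and $T(\neg\beta_{j_0})$ once the conjunct contributed by $\alpha_{j_0}$ is discarded. If $j_0<c$ the formula $D_{j_0}$ additionally refers to $D_{j_0+1}$, but this subformula sits inside the branch $\neg\alpha_{j_0}\wedge D_{j_0+1}$: after expanding by the disjunction and double negation clauses, every occurrence of $T(D_{j_0+1})$ is gated by $T(\neg\alpha_{j_0})$, so once we know that $\neg\alpha_{j_0}$ is not true the whole $D_{j_0+1}$ branch drops out, and $T(D_{j_0})$, $T(\neg D_{j_0})$ collapse to $T(\beta_{j_0})$, $T(\neg\beta_{j_0})$ using that $\beta_{j_0}$ is decided by $T$.

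For the inductive step fix $k<j_0$ and assume the claim for $k+1$ (legitimate since $k+1\le j_0$). Here $T(\alpha_k)$ fails, so the branch $\alpha_k\wedge\beta_k$ contributes nothing; the computation then reduces $T(D_k)$ to $T(\neg\alpha_k)\wedge T(D_{k+1})$ and $T(\neg D_k)$ to $T(\neg D_{k+1})$ after cancelling the dead $\alpha_k$-conjuncts. Applying the induction hypothesis, and the availability of $\neg\alpha_k$ as true, yields $T(D_k)\equiv T(\beta_{j_0})$ and $T(\neg D_k)\equiv T(\neg\beta_{j_0})$. All of these are mechanical rewritings using only the listed $\PT^-$ clauses.

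The one genuinely delicate point---and the place where the argument really consumes its hypotheses---is the bookkeeping of the negated guards $\neg\alpha_k$. The reductions above silently require that at the anchor $\neg\alpha_{j_0}$ is \emph{not} true (so the $D_{j_0+1}$ branch is inert) and that at each step $\neg\alpha_k$ \emph{is} true for $k<j_0$ (so the surviving branch reduces cleanly). Since $\PT^-$ is only a positive, Strong-Kleene theory, $\neg T(\alpha_k)$ does not by itself deliver $T(\neg\alpha_k)$, nor does $T(\alpha_{j_0})$ by itself exclude $T(\neg\alpha_{j_0})$; what is actually needed is that each $\alpha_k$ with $k\le j_0$ is decided by $T$ in a classically consistent way, i.e.\ that exactly one of $T(\alpha_k)$, $T(\neg\alpha_k)$ holds. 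In the intended application the sentences $\alpha_i$ are atomic equalities, for which axioms 1(a) and 1(b) of $\PT^-$ give $T(s=t)\equiv(\val{s}=\val{t})$ and $T(\neg s=t)\equiv(\val{s}\neq\val{t})$ and hence supply exactly this totality-with-consistency for free; carrying this observation uniformly through the induction is the main thing to get right.
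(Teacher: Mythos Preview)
The paper does not prove this lemma itself; it simply cites \cite{CWL}, Lemma~2.3. Your downward induction on the starting index $k$ from $j_0$ to $0$, unwinding each $D_k$ via the $\PT^-$ clauses for conjunction, disjunction and double negation, is exactly the intended argument and is correct.

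Your closing paragraph is the right place to put the emphasis: the lemma as literally stated does not assume that the $\alpha_k$ for $k\le j_0$ are total and consistent, yet the reductions silently need $T(\neg\alpha_k)$ for $k<j_0$ and $\neg T(\neg\alpha_{j_0})$. One small inaccuracy: in the applications in \emph{this} paper the $\alpha_i$ are not atomic equalities. In the proof of Theorem~\ref{tw_srecsat_to_petetot} they are the compound formulae $\alpha_{j}(x)=x<\num{a}\wedge\phi_0(x)\wedge\dots\wedge\phi_{j-1}(x)\wedge\neg\phi_j(x)$, and in Section~\ref{sect_int} the $\alpha_i(\phi,t,b)$ are again compound. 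In both cases the relevant $\alpha_k$ (for $k\le j_0$ standard) are \emph{standard} arithmetical formulae, so the required totality and consistency is supplied by $\UTB^-$ (Fact~\ref{utb_w_pt_wpt}), not by the atomic axioms 1(a)--(b) alone. With that adjustment your sketch is complete.
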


For a proof, see \cite{CWL}, Lemma 2.3.
Now we are ready to prove that any model of $\PA$ expandable to a model of $\PTtot$ is short recursively saturated.
\begin{proof}
 Fix any recursive type $p(x) = (\phi_i(x) \wedge x<a)_{i \in \omega}$ (with a parameter $a$) and suppose that for any finite set $\phi_0, \ldots, \phi_k$ there is some $b_k<a$ such that $M \models \phi_0(b_k) \wedge \ldots \wedge \phi_k(b_k).$ Let 
 \begin{eqnarray*}
 \alpha_0(x) & = & \neg x < \num{a} \vee \neg \phi_0(x) \\
  \alpha_{j+1}(x) & = & x<\num{a} \wedge \phi_0(x) \wedge \ldots \wedge \phi_{j}(x) \wedge \neg \phi_{j+1}(x).
 \end{eqnarray*}
 In a sense, formulae $\alpha_j(x)$ measure how much of the type $p$ is realised by $x$. Now, if the type $p$ is ommitted in the model $M$, then for any $x$, there exists a standard $j$ such that $(M,T) \models T\alpha_j(\num{x}).$ 
 Let $\beta_j(y)$ be defined as 
 \begin{displaymath}
  \beta_j(y) = y< \num{a} \wedge \phi_0(y) \wedge \ldots \wedge \phi_{j+1}(y). 
 \end{displaymath}
 Now, fix any nonstandard $c$ and consider the (nonstandard) formula
 \begin{displaymath}
  \phi(x,y) = \bigvee_{i=0}^{c,\alpha(x)} \beta_i(y).
 \end{displaymath}
 By Lemma \ref{lem_total_stopping_disjunction} and our assumption that the type $p$ is omitted in $M$, the sentence $\phi(\num{x},\num{y})$ is either true or false for any fixed $x,y \in M.$ But this means that the formula $\phi(x,y)$ is total. One can check that then a formula 
 \begin{displaymath}
  \psi(z) = \exists y \forall x<z \phi(x,y)
 \end{displaymath}
 is also total. Note that this formula intuitively says that there is a $y$ which satisfies more of a type $p$ than any of the elements of $M$ up to $z.$ Now, we will show that $\psi(z)$ is progressive, i.e.,
 \begin{displaymath}
  (M,T) \models \forall z \Big(T \psi(\num{z}) \rightarrow T \psi(\num{z+1})\Big).
 \end{displaymath}
 Fix any $z$ and suppose that $T\psi(\num{z})$ holds. Then there exists a $y$ such that $T (\qcr{\forall x<z \ \phi(x,\num{y})}).$ Now, let $j$ be the least number such that  $T \alpha_j(\num{z}).$ Since $j$ is a standard number and $p$ is a type, there exists $y'$ such that $\phi_0(y') \wedge \ldots \wedge \phi_{j+1}(y')$ holds in $M$, i.e. $(M,T) \models T \beta_j(\num{y'})$. Let $y'' = y$ if also 
\begin{displaymath}
 T \Big(\phi_0(\num{y}) \wedge \ldots \wedge \phi_{j+1}(\num{y}) \Big)
\end{displaymath}
and $y''=y'$ otherwise. In other hand, we fix either $y$ or $y'$, whichever satisfies ''more'' formulae $\phi_i.$ One readily checks that then
\begin{displaymath}
 (M,T) \models \forall x < z+1 \ \phi(\num{x},\num{y''}).
\end{displaymath}

We have shown that the formula $\psi(z)$ is total and progressive. By the internal induction for total formulae this means that 
\begin{displaymath}
 (M,T) \models \forall z \ T \psi(\num{z}).
\end{displaymath}
In particular, we have $T \psi(\num{a}),$ where $a$ is the parameter used as a bound in the type $p$. But then for some $d$, we have 
\begin{displaymath}
 (M,T) \models \forall x<a \ T\phi(\num{x}, \num{d}).
\end{displaymath}
Now, since $p$ is a type, for an arbitrary $k \in \omega$, there exists some $x<d$ such that $\phi_0(x) \wedge \ldots \wedge \phi_k(x)$. Since $(M,T) \models T \phi(\num{x},\num{d}),$ it follows that $d<a \wedge \phi_0(d) \wedge \ldots \wedge \phi_{k+1}(d).$ As we have chosen an arbitrary $k$, we see  that actually $d$ satisfies the type $p$. We conclude that $M$ is short recursively saturated.
\end{proof}

Let us summarize our findings from \cite{CWL} and this paper:
\begin{itemize}
 \item Any recursively saturated model of $\PA$ (possibly uncountable) admits an expansion to a model of $\PTtot.$
 \item If a model $M$ expands to a model of $\PTtot$, then it is short recursively saturated.
\end{itemize}
Unfortunately, we do not know whether any of the implications reverses. 

\poprawka{Cieśliński and Engstr\"om have (independetly) found the following characterisation the class of models of $\PA$ which admit an expansion to a model of $\TB$, i.e., the truth theory axiomatised with the induction scheme for the whole language and the following scheme of Tarski's biconditionals:
\begin{displaymath}
T (\qcr{\phi}) \equiv \phi,
\end{displaymath}
where $\phi$ is an arithmetical sentence. 
\begin{tw}[Cieśliński, Engstr\"om]\footnote{See \cite{cies_disquotational}, Theorem 7.}
	Let $M$ be a nonstandard model of $\PA$. Then the following are equivalent:
	\begin{enumerate}
		\item $M$ admits an expansion to a model $(M,T) \models \TB$.
		\item There exists an element $c \in M$ such that for all (standard) arithmetical sentences $\phi$, $M \models \qcr{\phi} \in c$ iff $M \models \phi$, i.e., $M$ codes its own theory.
	\end{enumerate}
\end{tw}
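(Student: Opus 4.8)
The plan is to prove the two implications separately; the whole content lies in the interplay between the full $\mathcal{L}_T$-induction available in $\TB$ and the coding of bounded sets available already in $\PA$, together with an essential use of the nonstandardness of $M$.

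For $(1)\Rightarrow(2)$ I would work inside a fixed expansion $(M,T)\models\TB$ and exploit that $\TB$ proves induction for \emph{every} $\mathcal{L}_T$-formula. Writing $x\in c$ for the usual $\mathcal{L}_{\PA}$-definable bounded membership relation, consider the $\mathcal{L}_T$-formula
\[
\chi(a)\;:=\;\exists c\,\forall x<a\,\bigl(x\in c\leftrightarrow T(x)\bigr).
\]
I would establish $(M,T)\models\forall a\,\chi(a)$ by formalised induction on $a$: the code of the empty set witnesses $\chi(0)$, and from a witness $c$ for $\chi(a)$ one obtains a witness for $\chi(a+1)$ by adjoining $a$ to $c$ exactly when $T(a)$ holds, an operation that is $\PA$-provably total. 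Crucially, this is induction on a formula containing $T$, so it is licensed by $\TB$ but not by $\PA$ alone; this is the single point where the strength of $\TB$ is used. Now I would fix any nonstandard $a\in M$ and let $c$ be the resulting witness. Since every standard arithmetical sentence $\phi$ has $\qcr{\phi}<a$, we get $M\models\qcr{\phi}\in c$ iff $(M,T)\models T(\qcr{\phi})$, and by the Tarski biconditional $T(\qcr{\phi})\equiv\phi$ of $\TB$ the latter holds iff $M\models\phi$. Hence $c$ codes the theory of $M$, which is (2).

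For $(2)\Rightarrow(1)$, I would assume $c\in M$ satisfies the coding condition and simply interpret the truth predicate by
\[
T:=\{\,b\in M:M\models b\in c\,\},
\]
using the same $\mathcal{L}_{\PA}$-definable membership relation. The point is that $(M,T)$ is then a \emph{definitional} expansion of $M$, since $T$ is explicitly defined by the $\mathcal{L}_{\PA}$-formula $x\in c$ with the single parameter $c$. Therefore every instance of the $\mathcal{L}_T$-induction scheme in $(M,T)$ translates into an instance of the ordinary $\mathcal{L}_{\PA}$-induction scheme with parameter $c$, which holds because $M\models\PA$; thus full $\mathcal{L}_T$-induction comes for free, irrespective of whatever nonstandard junk $c$ may also contain. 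For the disquotation axioms I would fix a standard arithmetical sentence $\phi$: then $(M,T)\models T(\qcr{\phi})$ iff $M\models\qcr{\phi}\in c$, which by hypothesis holds iff $M\models\phi$, i.e. iff $(M,T)\models\phi$ (as $\phi$ is arithmetical). So each biconditional $T(\qcr{\phi})\equiv\phi$ holds and $(M,T)\models\TB$.

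The main obstacle, and essentially the only delicate point, is the coding step of $(1)\Rightarrow(2)$: one has to verify that the induction yielding $\forall a\,\chi(a)$ really is carried out on a genuine $\mathcal{L}_T$-formula inside $(M,T)$, so that the nonstandard witness $c$ captures $T$ on \emph{all} sentence codes below a nonstandard bound rather than just the standard ones. Here the nonstandardness of $M$ is indispensable: it is what provides a bound $a$ lying strictly above every standard code, and it is exactly the full $\mathcal{L}_T$-induction of $\TB$ that converts the trivial coding of finite sets (valid separately for each standard $a$) into a single object $c$ coding an initial segment past all standard sentences. The remaining ingredients — $\PA$-definability of bounded membership and $\PA$-provable totality of adjoining one element to a coded set — are routine.
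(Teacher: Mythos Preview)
The paper does not prove this theorem: it is quoted as a known result of Cie\'sli\'nski and Engstr\"om with a reference to the literature, and is used only to derive the subsequent corollary about $\TB \leq_M \PTtot$. Your argument is correct and is in fact the standard one: for $(1)\Rightarrow(2)$ you run $\mathcal{L}_T$-induction (available in $\TB$) on the formula expressing that $T\cap[0,a)$ is coded, then pick a nonstandard $a$ and invoke the Tarski biconditionals; for $(2)\Rightarrow(1)$ you interpret $T$ by the parametrically $\mathcal{L}_{\PA}$-definable set $\{x : x\in c\}$, which makes the expansion definitional, so full $\mathcal{L}_T$-induction is inherited from $\PA$ and the biconditionals follow directly from the hypothesis on $c$.
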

It can be easily shown that every nonstandard short recursively saturated model $M \models \PA$ satisfies the second item of the above characterisation. Hence, every short recursively saturated model of $\PA$ admits an expansion to a model of $\TB.$ Thus we obtain the following corollary:
\begin{wniosek} \label{cor_modele_pttot_sa_modelami_tb}
	$\TB \leq_M \PTtot$, i.e., every model $M \models \PA$ which admits an expansion to a model of $\PTtot$, also admits an expansion to a model of $\TB$.
\end{wniosek}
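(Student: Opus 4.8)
The plan is to combine Theorem~\ref{tw_srecsat_to_petetot} with the characterisation of Cieśliński and Engstr\"om recalled just above. Assume $M \models \PA$ admits an expansion to a model of $\PTtot$. First I would observe that $M$ must be nonstandard: the standard model is prime, and by the nonconservativity result recalled above no prime model of $\PA$ expands to a model of $\PTtot$. By Theorem~\ref{tw_srecsat_to_petetot}, $M$ is short recursively saturated, so since the Cieśliński--Engstr\"om theorem concerns nonstandard models, everything reduces to the following claim, which I regard as the heart of the argument: every nonstandard short recursively saturated model of $\PA$ satisfies clause (2) of the characterisation, i.e.\ contains an element coding its own theory.

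To prove the claim I would fix a recursive enumeration $\phi_0, \phi_1, \dots$ of all arithmetical sentences and, for each $i \in \mathbb{N}$, form the $\Lang_{\PA}$-formula $\sigma_i(x) := \bigwedge_{j \le i} \bigl( \qcr{\phi_j} \in x \leftrightarrow \phi_j \bigr)$, where $\in$ is the usual $\PA$-definable (Ackermann) membership relation. The skeleton $(\sigma_i)_{i \in \mathbb{N}}$ is recursive, as $\sigma_i$ is obtained effectively from the first $i+1$ sentences of the enumeration. Now fix any nonstandard $a \in M$ and consider the type $p(x) = \set{x < a \wedge \sigma_i(x)}{i \in \mathbb{N}}$, which is exactly of the shape required by the definition of short recursive saturation (the only parameter being $a$).

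The main point to check is that $p$ is finitely realised below $a$. Given $\sigma_0, \dots, \sigma_k$, let $S = \set{\qcr{\phi_j}}{j \le k,\ M \models \phi_j}$; this is a standard finite set, so its code $c_0$ is a standard element of $M$ and hence $c_0 < a$ for the nonstandard $a$. Since $\in$ is decidable and $M$ agrees with $\mathbb{N}$ on standard instances, $M \models \qcr{\phi_j} \in c_0 \leftrightarrow \phi_j$ for every $j \le k$, so $c_0$ realises $\sigma_0 \wedge \dots \wedge \sigma_k$ below $a$. By short recursive saturation $p$ is realised by some $c < a$, and then for every standard sentence $\phi_j$ we have $M \models \qcr{\phi_j} \in c$ iff $M \models \phi_j$. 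Thus $c$ witnesses clause (2), establishing the claim; feeding $M$ into the theorem of Cieśliński and Engstr\"om then yields an expansion of $M$ to a model of $\TB$, which is precisely $\TB \leq_M \PTtot$.

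The step I expect to require the most care is recognising that the coding type can be cast in the restricted ``short'' form $x < a \wedge \sigma_i(x)$ with a single nonstandard bound $a$: the crucial observation is that although the codes $\qcr{\phi_j}$ grow without bound in the standard cut, every finite fragment of the theory is coded by a \emph{standard} element, which automatically lies below any nonstandard $a$. This is exactly what makes short recursive saturation --- rather than full recursive saturation --- sufficient here, and it is also why the argument genuinely fails for the standard model (there is no nonstandard bound, and indeed $\mathbb{N}$ cannot code true arithmetic by a single element).
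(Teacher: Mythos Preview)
Your approach is the paper's own: apply Theorem~\ref{tw_srecsat_to_petetot} to obtain short recursive saturation, argue that a nonstandard short recursively saturated model codes its own theory, and invoke the Cieśliński--Engstr\"om characterisation. The paper leaves the coding step as ``easily shown''; the recursive type you build with $\sigma_i(x)$ and the observation that each finite fragment is realised by a \emph{standard} code (hence below any nonstandard $a$) is exactly the intended argument, and it is correct.

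There is one small slip. You dispose of the case $M = \mathbb{N}$ by citing the result that no prime model of $\PA$ expands to $\PTtot$, but that fails for the standard model itself: taking $T$ to be the set of (codes of) true arithmetical sentences gives $(\mathbb{N},T) \models \PTtot$, since in $\mathbb{N}$ every sentence is standard, $T$ is fully classically compositional (so every formula is total), and $T\phi(\num{x})$ is equivalent to $\phi(x)$, whence internal induction collapses to ordinary induction. The nonconservativity result from \cite{CWL} concerns \emph{nonstandard} prime models. The repair is immediate---the very same $T$ already witnesses an expansion of $\mathbb{N}$ to a model of $\TB$---so simply handle $M = \mathbb{N}$ separately and then run your argument for nonstandard $M$ as written.
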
}

\subsection{A non-result}

We are going to show that the method used to prove that every recursively saturated model of $\PA$ admits an expansion to a model of $\PT^-+\intot$ cannot be used to obtain a stricter upper bound on the class of models expandable to this theory (if such a stricter upper bound exists). Let us recall that in $\cite{CWL}$, it was shown that every recursively saturated model of $\PA$ can be expanded to a model of \poprawka{$\PT^- + \intot$}.  Let us recall the standard definition of a function which generates possible extensions for the $\PT^-$ truth predicate. 

\begin{konwencja}
	If $\M\models \PA$, then $\Sent_{\M}$, $\form^{\leq 1}_{\M}$, $\term_{\M}$ denote the set of sentences of $\Lang_{\PA}$, the set of formulae with at most one free variable and the set of terms, respectively, \emph{in the sense of $\M$.}
\end{konwencja}

\begin{definicja}\label{defin_oper_theta}
	Let $\mathcal{M}\models \PA$ and $A\subseteq \M$. Define:
	\begin{eqnarray}
	\Theta_{\mathcal{M}}(\phi,A)&:= &\ \ \mathcal{M}\models \exists s,t\ \ [\phi = (s=t)\wedge \val{s} = \val{t}]\nonumber\\
	&\vee&\ \  \mathcal{M}\models\exists s,t\ \ [\phi = \neg(s=t) \wedge  \val{s}\neq \val{t}]\nonumber\\
	&\vee&\ \ \exists \psi\in \Sent_{\mathcal{M}} [\mathcal{M}\models \phi = \neg\neg\psi]\wedge \psi\in A\nonumber\\
	&\vee&\ \ \exists \psi_1,\psi_2 \in \Sent_{\mathcal{M}}[\mathcal{M}\models\phi = (\psi_1\vee\psi_2)] \wedge \bigl(\psi_1\in A)\vee (\psi_2\in A)\bigr)\nonumber\\
	&\vee&\ \ \exists \psi_1,\psi_2 \in \Sent_{\mathcal{M}}[\mathcal{M}\models\phi = \neg(\psi_1\vee\psi_2)] \wedge \bigl( (\neg\psi_1\in A)\wedge (\neg\psi_2\in A)\bigr)\nonumber\\
	&\vee&\ \ \exists \psi(x)\in {\form}^1_{\mathcal{M}}[\mathcal{M}\models\phi = \exists x\psi] \wedge \exists x\in \M\ \ (\psi(\num{x})\in A)\nonumber\\
	&\vee&\ \ \exists \psi(x)\in {\form}^1_{\mathcal{M}}[\mathcal{M}\models\phi = \neg\exists x\psi] \wedge \forall x\in \M\ \ (\neg\psi(\num{x})\in A)\nonumber
	\end{eqnarray}
	Let $\Gamma^{\mathcal{M}}: \mathcal{P}(M)\rightarrow\mathcal{P}(M)$ be the function defined:
	\begin{equation}\label{Gamma}\tag{$\Gamma$}
	\Gamma^{\mathcal{M}}(A) = \set{\phi\in\mathcal{M}}{\Theta_{\mathcal{M}}(\phi,A)}.
	\end{equation}
	Let us now define:
	\begin{eqnarray}
	\Gamma^{\mathcal{M}}_0 &=& \Gamma^{\mathcal{M}}(\emptyset)\nonumber\\
	\Gamma^{\mathcal{M}}_{\alpha+1} &=& \Gamma^{\mathcal{M}}(\Gamma^{\mathcal{M}}_{\alpha})\nonumber\\
	\Gamma^{\mathcal{M}}_{\beta} &=& \bigcup_{\alpha<\beta}\Gamma^{\mathcal{M}}_{\alpha},\textnormal{ for $\beta$ a limit ordinal.}\nonumber
	\end{eqnarray}
	It can be checked that for some ordinal $\alpha$ we must get $\Gamma^{\mathcal{M}}_{\alpha+1} = \Gamma^{\mathcal{M}}_{\alpha}$, i.e., $\Gamma^{\mathcal{M}}_{\alpha}$ is a fixpoint of $\Gamma^{\mathcal{M}}$. In general, if $A$ is any fixpoint of $\Gamma^{\mathcal{M}}$, then
	\[(\mathcal{M},A)\models\PT^-.\]
	Let $\alpha_{\mathcal{M}}$ denote the least ordinal $\alpha$ such that $\Gamma_{\alpha}^{\mathcal{M}}$ is a fixpoint of $\Gamma^{\mathcal{M}}$.	
\end{definicja}
In \cite{CWL}, the following lemmata were proved:
\begin{lemat}\label{lemat_recsat_to_omega}
	If $\mathcal{M}\models \PA$ is recursively saturated, then $\alpha_{\mathcal{M}} = \omega$.
\end{lemat}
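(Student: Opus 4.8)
The plan is to prove the two inequalities $\alpha_{\mathcal{M}}\ge\omega$ and $\alpha_{\mathcal{M}}\le\omega$ separately; recursive saturation enters only in the second. For $\alpha_{\mathcal{M}}\ge\omega$ — which in fact holds for every model of $\PA$ — I would exhibit for each standard $n$ a \emph{standard} sentence whose stage is exactly $n$. Taking the right-nested disjunctions $D_0=\qcr{0=0}$ and $D_{n+1}=\qcr{0\ne 0}\vee D_n$, one checks by external induction on $n$ that $D_n\in\Gamma^{\mathcal{M}}_n\setminus\Gamma^{\mathcal{M}}_{n-1}$: the false disjunct $0\ne 0$ never enters the hierarchy, so $D_{n+1}$ can be admitted by the disjunction clause only once $D_n$ has been. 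As the $D_n$ are pairwise distinct, $\Gamma^{\mathcal{M}}_n\subsetneq\Gamma^{\mathcal{M}}_{n+1}$ for every finite $n$, so no finite stage is a fixpoint.

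For $\alpha_{\mathcal{M}}\le\omega$ I must show $\Gamma^{\mathcal{M}}(\Gamma^{\mathcal{M}}_\omega)=\Gamma^{\mathcal{M}}_\omega$, where $\Gamma^{\mathcal{M}}_\omega=\bigcup_{n<\omega}\Gamma^{\mathcal{M}}_n$. One inclusion is automatic from monotonicity. For the reverse, I would inspect the clauses of $\Theta_{\mathcal{M}}$ and observe that $\Gamma^{\mathcal{M}}$ commutes with the union of the increasing chain $(\Gamma^{\mathcal{M}}_n)$ on every clause except the one for $\neg\exists$: if $\theta\in\Gamma^{\mathcal{M}}_{\omega+1}$ has principal connective atomic, $\neg\neg$, $\vee$, $\neg\vee$, or $\exists$, then some immediate constituent (or its negation) already lies in a single $\Gamma^{\mathcal{M}}_n$, whence $\theta\in\Gamma^{\mathcal{M}}_{n+1}\subseteq\Gamma^{\mathcal{M}}_\omega$. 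Thus the whole content reduces to ruling out a sentence $\neg\exists x\,\psi\in\Gamma^{\mathcal{M}}_{\omega+1}\setminus\Gamma^{\mathcal{M}}_\omega$, i.e. one for which $\neg\psi(\num a)\in\Gamma^{\mathcal{M}}_\omega$ for \emph{every} $a\in\mathcal{M}$ but with no common finite bound on the stages.

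The key device is a uniform arithmetical description of the finite approximations. I would define by meta-recursion on $n$ a primitive-recursive sequence of $\Lang_{\PA}$-formulae $\delta_n(x)$, with $\delta_0$ defining $\Gamma^{\mathcal{M}}(\emptyset)$ and $\delta_{n+1}$ obtained from $\delta_n$ by substituting $\delta_n$ for the occurrences of ``$\cdot\in A$'' in the arithmetical rendering of $\Theta_{\mathcal{M}}$; an external induction then gives $\mathcal{M}\models\delta_n(\theta)\iff\theta\in\Gamma^{\mathcal{M}}_n$ for every standard $n$. (The quantifier complexity of $\delta_n$ grows with $n$, so there is no single formula for the whole chain, but this is harmless: recursive saturation applies to recursive types of unbounded complexity.) Suppose such a $\neg\exists x\,\psi$ existed. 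The stages $\rho(a):=\min\{n:\neg\psi(\num a)\in\Gamma^{\mathcal{M}}_n\}$ would be finite for every $a$ yet unbounded, since a common bound $n$ would give $\neg\exists x\,\psi\in\Gamma^{\mathcal{M}}_{n+1}$. Hence the set $p(x)=\{\,\neg\delta_n(\neg\psi(\num x)):n\in\omega\,\}$ — a \emph{recursive} type with the single parameter $\psi$ — is finitely realised in $\mathcal{M}$. By recursive saturation it is realised by some $a^{*}$, giving $\neg\psi(\num{a^{*}})\notin\Gamma^{\mathcal{M}}_n$ for all standard $n$, i.e. $\neg\psi(\num{a^{*}})\notin\Gamma^{\mathcal{M}}_\omega$, contradicting the assumption that all instances lie in $\Gamma^{\mathcal{M}}_\omega$. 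Therefore $\Gamma^{\mathcal{M}}_\omega$ is a fixpoint and $\alpha_{\mathcal{M}}=\omega$.

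I expect the main obstacle to be the careful construction and verification of the uniform formulae $\delta_n$ — in particular checking that the substitution implementing $\Gamma^{\mathcal{M}}(\{y:\delta_n(y)\})$ is faithful at the single universally quantified occurrence of ``$\in A$'' in the $\neg\exists$ clause, and that the resulting sequence is genuinely primitive recursive, so that $p(x)$ qualifies as a recursive type with a parameter from $\mathcal{M}$. Everything else is a routine clause-by-clause verification together with one application of recursive saturation.
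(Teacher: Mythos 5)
Your proof is correct; note that the paper does not actually prove Lemma \ref{lemat_recsat_to_omega} but imports it from \cite{CWL}, and the argument there is essentially the one you give: each finite stage $\Gamma^{\mathcal{M}}_n$ is uniformly arithmetically definable by a standard formula $\delta_n$, the only clause of $\Theta_{\mathcal{M}}$ that does not automatically commute with the union of the chain is the $\neg\exists$ clause, and recursive saturation applied to the recursive type $\{\neg\delta_n(\neg\psi(\num{x})) : n\in\omega\}$ (with parameter $\psi$) rules out unbounded stages there, while your sentences $D_n$ witness that no finite stage is already a fixpoint. The only cosmetic slip is in the clause-by-clause part: for $\neg(\psi_1\vee\psi_2)$ one needs \emph{both} $\neg\psi_1$ and $\neg\psi_2$ in some single $\Gamma^{\mathcal{M}}_n$, which follows from the chain being increasing rather than from "some immediate constituent" lying in a single stage.
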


\begin{lemat}
	If $\mathcal{M}\models \PA$ and $\alpha_{\mathcal{M}}=\omega$, then $(\mathcal{M},\Gamma^{\mathcal{M}}_{\omega})\models \PT^- + \intot$.
\end{lemat}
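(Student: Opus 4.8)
The plan is to establish the two conjuncts of $\PT^- + \intot$ separately. That $(\M,\Gamma^{\M}_\omega)\models\PT^-$ is immediate from Definition \ref{defin_oper_theta}: every fixpoint $A$ of $\Gamma^{\M}$ satisfies $(\M,A)\models\PT^-$, and the hypothesis $\alpha_{\M}=\omega$ says exactly that $\Gamma^{\M}_\omega$ is a fixpoint. So the whole content lies in the scheme $\intot$. I will also use one auxiliary fact, proved by a routine induction on the stages: the fixpoint $\Gamma^{\M}_\omega$ is \emph{consistent}, i.e. there is no $\psi$ with both $\psi\in\Gamma^{\M}_\omega$ and $\neg\psi\in\Gamma^{\M}_\omega$. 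This holds because the construction starts from $\emptyset$ and every clause of $\Theta_{\M}$ preserves consistency (e.g. $\psi_1\vee\psi_2$ and $\neg(\psi_1\vee\psi_2)$ cannot both appear without a lower-stage violation).

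The heart of the argument is that \emph{on total formulae the truth predicate coincides with an honest arithmetical predicate}. Recall that $\PA$ defines, uniformly in a complexity bound $D$, a partial satisfaction predicate $\mathrm{Sat}(y,D)$ obeying the Tarski clauses for all sentences of depth below $D$. I would first prove a Matching Lemma: for every sentence $\psi$ coded in $\M$ that receives a determinate value in the fixpoint (that is, $\psi\in\Gamma^{\M}_\omega$ or $\neg\psi\in\Gamma^{\M}_\omega$) and every $D\geq\dpt(\psi)$, one has $\psi\in\Gamma^{\M}_\omega\Leftrightarrow\mathrm{Sat}(\psi,D)$. Here the hypothesis $\alpha_{\M}=\omega$ is essential: it gives $\Gamma^{\M}_\omega=\bigcup_{n\in\omega}\Gamma^{\M}_n$, so every determinate $\psi$ enters the construction at some \emph{standard} stage, and the lemma can then be proved by external induction on this standard rank. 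In the inductive step one inspects which clause of $\Theta_{\M}$ is responsible for $\psi$, applies the induction hypothesis to the relevant immediate subsentences (of strictly smaller rank and of depth below $D$), and matches this against the corresponding Tarski clause that $\mathrm{Sat}(\cdot,D)$ satisfies in $\M$; consistency is used to pass the equivalence from a sentence to its negation.

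With the Matching Lemma available, $\intot$ follows smoothly. I would fix an arbitrary $\phi(v)\in\form^{\leq 1}_{\M}$, assume $\tot(\phi(v))$ together with the antecedents of $\inter(\phi(v))$, namely $T(\phi(\num 0))$ and $\forall x\,\bigl(T(\phi(\num x))\rightarrow T(\phi(\num{x+1}))\bigr)$, and derive $\forall x\,T(\phi(\num x))$. Put $D=\dpt(\phi)$; since substituting a numeral for $v$ does not change logical depth, every instance $\phi(\num x)$ has depth $\leq D$. Totality says each $\phi(\num x)$ is determinate, so the Matching Lemma yields, for every $x$, the equivalence $T(\phi(\num x))\equiv\mathrm{Sat}(\phi(\num x),D)$. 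Hence the externally described class $\{x : T(\phi(\num x))\}$ equals the $\M$-definable set $U=\{x : \mathrm{Sat}(\phi(\num x),D)\}$. Now $0\in U$ by $T(\phi(\num 0))$, and $U$ is closed under successor because the progressivity assumption, read through the equivalence, asserts exactly $x\in U\Rightarrow x+1\in U$. As $\M\models\PA$ and $U$ is definable, ordinary induction inside $\M$ gives $U=M$, i.e. $\forall x\,T(\phi(\num x))$, which is the consequent of $\inter(\phi(v))$. Since $\phi(v)$ was an arbitrary total formula, $\intot$ holds.

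The step I expect to be the main obstacle is the Matching Lemma, and precisely the place where $\alpha_{\M}=\omega$ does its work. The lemma is what licenses replacing the opaque, non-definable predicate $T$ by a definable arithmetical one on total formulae, and here one must be careful with nonstandard $\phi$: the full compositional predicate $\mathrm{Sat}(\cdot,D)$ evaluates every subformula, whereas the value of $\phi(\num x)$ in the fixpoint may be settled after only finitely many ``short-circuiting'' steps (a single true disjunct fixing a disjunction regardless of the other, possibly nonstandard-depth, disjunct). The induction on the standard rank is exactly what reconciles the two pictures: a determinate value is always forced along a finite portion of the evaluation tree, and on that portion the clauses of $\Theta_{\M}$ and the Tarski clauses for $\mathrm{Sat}(\cdot,D)$ agree. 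Were the closure ordinal larger than $\omega$, determinate sentences could enter only at nonstandard rank and this induction would collapse, which is why the conclusion is tied to $\alpha_{\M}=\omega$ rather than to mere expandability to $\PTtot$.
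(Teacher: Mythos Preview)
The paper does not supply its own proof of this lemma; it is quoted from \cite{CWL}. Assessing your proposal on its merits, there is a genuine gap. You invoke a single arithmetical formula $\mathrm{Sat}(y,D)$, with $D$ a free variable, obeying the Tarski clauses for all sentences of depth $\leq D$. No such formula exists: if it did, $\mathrm{Sat}(y,\dpt(y)+1)$ would be a full arithmetical truth predicate, contradicting Tarski's theorem. For each \emph{standard} $n$ there is indeed a partial truth predicate $\mathrm{Sat}_n$, but these are distinct formulae, not instances of one formula with $n$ as a parameter. Your Matching Lemma is therefore unusable exactly where $\intot$ bites: when you ``put $D=\dpt(\phi)$'' you may be choosing a nonstandard $D$, for which there is no $\mathrm{Sat}(\cdot,D)$ to match against. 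And nonstandard-depth total formulae do occur in the least fixpoint: take $\phi(v)=(v=v)\vee\chi$ with $\chi$ any fixed nonstandard sentence; every instance already lies in $\Gamma^{\M}_1$, so $\phi$ is total, yet $\dpt(\phi)$ is nonstandard.

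The repair keeps your overall strategy but replaces the fictitious $\mathrm{Sat}(\cdot,D)$ by the finite stages themselves: for each standard $n$ the set $\Gamma^{\M}_n$ is defined in $\M$ by an arithmetical formula $\gamma_n$. The missing observation is that totality of $\phi(v)$ forces a \emph{uniform} standard stage. Since $\Gamma^{\M}_\omega$ is a fixpoint, totality of $\phi$ puts the sentence $\forall v\,(\phi(v)\vee\neg\phi(v))$ into $\Gamma^{\M}_\omega$; and because $\alpha_{\M}=\omega$ gives $\Gamma^{\M}_\omega=\bigcup_{n<\omega}\Gamma^{\M}_n$, this single sentence already lies in some $\Gamma^{\M}_m$ with $m$ standard. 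Unwinding the clauses for $\neg\exists$, $\neg\neg$ and $\vee$ then yields a fixed standard $k$ with, for every $x$, one of $\phi(\num{x}),\neg\phi(\num{x})$ in $\Gamma^{\M}_k$. Together with consistency of the least fixpoint this shows $\{x: T(\phi(\num{x}))\}=\{x:\M\models\gamma_k(\phi(\num{x}))\}$, a set definable in $\M$, and now your final paragraph applies verbatim via induction in $\M$. This is where $\alpha_{\M}=\omega$ really does its work---not through an external induction on rank towards an undefined target, but by pushing a universally quantified sentence down into a finite, arithmetically definable stage.
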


Now we shall show that the converse to \ref{lemat_recsat_to_omega} holds. In particular, our method of finding extensions for $\PT^- + \intot$ works only for recursively saturated models.

\begin{lemat}
	For every non-standard $\mathcal{M}\models \PA$. If $\alpha_{\mathcal{M}} = \omega$, then $\mathcal{M}$ is recursively saturated.
\end{lemat}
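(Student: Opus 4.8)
The plan is to prove the contrapositive in spirit: assuming $\alpha_{\mathcal{M}}=\omega$, I will show that $\mathcal{M}$ realises every finitely satisfiable recursive type, by arguing that an omitted type would force $\Gamma^{\mathcal{M}}_{\omega}$ to fail to be a fixpoint. The first step is a reduction. Under $\alpha_{\mathcal{M}}=\omega$ the set $\Gamma^{\mathcal{M}}_{\omega}=\bigcup_n\Gamma^{\mathcal{M}}_n$ is the least fixpoint, so for $\phi\in\Gamma^{\mathcal{M}}_{\omega}$ I may let $\rho(\phi)$ be the least $n$ with $\phi\in\Gamma^{\mathcal{M}}_n$. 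Inspecting $\Theta_{\mathcal{M}}$, every disjunct except the one for $\neg\exists v\psi$ is \emph{finitary}: it places $\phi$ into the next stage on the basis of at most two witnesses lying in $A$. Hence if $\phi\in\Gamma^{\mathcal{M}}(\Gamma^{\mathcal{M}}_{\omega})$ is produced by any of these clauses, then already $\phi\in\Gamma^{\mathcal{M}}_{\omega}$. So $\Gamma^{\mathcal{M}}_{\omega}$ can fail to be a fixpoint only through the universal clause, and it does fail precisely when there is a formula $\psi(v)$ coded in $\mathcal{M}$ with $\neg\psi(\num{x})\in\Gamma^{\mathcal{M}}_{\omega}$ for all $x\in\mathcal{M}$ while the stages $\rho(\neg\psi(\num{x}))$ are unbounded in $\omega$; for if they were bounded by some $n$, all $\neg\psi(\num{x})$ would lie in $\Gamma^{\mathcal{M}}_n$ and $\neg\exists v\psi$ would enter $\Gamma^{\mathcal{M}}_{n+1}$. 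It therefore suffices to manufacture such a $\psi$ out of an omitted type.

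So suppose, towards a contradiction, that some recursive type $p(v)=\{\phi_i(v):i\in\omega\}$ (with parameters from $\mathcal{M}$) is finitely satisfiable but omitted; replacing $\phi_i$ by $\phi_0\wedge\ldots\wedge\phi_i$ I may assume the $\phi_i$ are decreasing. Finite satisfiability gives, for each $n$, an $x$ satisfying $\phi_0,\ldots,\phi_n$, while omission gives, for every $x$, a \emph{standard} least index $g(x)$ with $\mathcal{M}\models\neg\phi_{g(x)}(x)$; thus $g$ is standard-valued but unbounded on $\mathcal{M}$. I fix a nonstandard $c$ and, by a recursion carried out inside $\mathcal{M}$ (legitimate since $i\mapsto\phi_i$ is recursive and the syntactic operations are primitive recursive), build the coded formula $\Phi_0(v)$ of nonstandard length given by backwards recursion
\[
 \Phi_j(v)=\neg\phi_j(v)\vee\bigl(\phi_j(v)\wedge\Phi_{j+1}(v)\bigr),\qquad \Phi_c(v)=(\num{0}=\num{0}),
\]
and set $\psi(v):=\neg\Phi_0(v)$.

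Now fix $x\in\mathcal{M}$ and write $j_0=g(x)$. Since $(\mathcal{M},\Gamma^{\mathcal{M}}_{\omega})\models\PT^-$ and hence, by Fact \ref{utb_w_pt_wpt}, satisfies $\UTB^-$, for each arithmetical $\phi_j$ we have $\phi_j(\num{x})\in\Gamma^{\mathcal{M}}_{\omega}$ iff $\mathcal{M}\models\phi_j(x)$ (and likewise for $\neg\phi_j(\num{x})$), with such standard-complexity instances appearing at finite stages. Reading the ladder from the top, $\Phi_{j_0}(\num{x})$ is true through its first disjunct $\neg\phi_{j_0}(\num{x})$, so it lies in $\Gamma^{\mathcal{M}}_{\omega}$ at a finite stage; but for $j<j_0$ the disjunct $\neg\phi_j(\num{x})$ is \emph{not} in the fixpoint (as $\mathcal{M}\models\phi_j(x)$), so $\Phi_j(\num{x})$ can enter only through the conjunction $\phi_j(\num{x})\wedge\Phi_{j+1}(\num{x})$. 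This is the crux: with the cheap disjunct unavailable, the stage is forced through the deeper conjunct, and unravelling $\wedge$ as $\neg(\neg\,\cdot\,\vee\neg\,\cdot)$ one checks $\rho(\Phi_j(\num{x}))\geq\rho(\Phi_{j+1}(\num{x}))+3$. Iterating from $j_0$ down to $0$ yields $\rho(\Phi_0(\num{x}))\geq 3\,j_0=3\,g(x)$, while $\Phi_0(\num{x})$ is true and so lies in $\Gamma^{\mathcal{M}}_{\omega}$. Consequently $\neg\psi(\num{x})=\neg\neg\Phi_0(\num{x})\in\Gamma^{\mathcal{M}}_{\omega}$ for every $x$, yet $\rho(\neg\psi(\num{x}))\geq 3\,g(x)$ is unbounded in $\omega$ because $g$ is unbounded. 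By the reduction of the first paragraph, the sentence $\neg\exists v\,\psi(v)$ lies in $\Gamma^{\mathcal{M}}(\Gamma^{\mathcal{M}}_{\omega})\setminus\Gamma^{\mathcal{M}}_{\omega}$, so $\Gamma^{\mathcal{M}}_{\omega}$ is not a fixpoint and $\alpha_{\mathcal{M}}>\omega$, contradicting the hypothesis. Hence no finitely satisfiable recursive type is omitted and $\mathcal{M}$ is recursively saturated.

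I expect the main obstacle to be the stage estimate $\rho(\Phi_j(\num{x}))\geq\rho(\Phi_{j+1}(\num{x}))+3$: one must verify that the disjunction-stage really behaves as a minimum, so that the false shortcut disjunct $\neg\phi_j(\num{x})$ cannot lower the stage, and one must keep track of the two extra double negations introduced by coding $\wedge$ through $\vee$. Everything else — that $\Gamma^{\mathcal{M}}_{\omega}$ is the fixpoint under $\alpha_{\mathcal{M}}=\omega$, that $\UTB^-$ pins down the arithmetical instances, and that the universal clause is the only infinitary one — is routine given the setup, and nonstandardness of $\mathcal{M}$ is used only to supply the nonstandard length $c$.
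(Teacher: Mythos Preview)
Your proof is correct and follows essentially the same strategy as the paper: from an omitted recursive type you build, by arithmetised recursion up to a nonstandard bound, a formula whose instances all lie in $\Gamma^{\mathcal{M}}_{\omega}$ but at unbounded finite stages, so $\Gamma^{\mathcal{M}}_{\omega}$ fails to be closed under the universal clause and hence is not a fixpoint. The paper's version is marginally leaner --- it uses the plain nested disjunction $\psi_0\vee(\psi_1\vee(\psi_2\vee\cdots))$ of the mutually exclusive formulae $\psi_i=\bigwedge_{j<i}\phi_j\wedge\neg\phi_i$ rather than your guarded ladder $\neg\phi_j\vee(\phi_j\wedge\Phi_{j+1})$, so the stage lower bound is read off directly from the nesting depth without your $+3$ bookkeeping, and it argues the contrapositive directly without your preliminary reduction paragraph --- but the core idea is identical.
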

\begin{proof}
	We prove the contraposition: suppose that a non-standard model $\mathcal{M}$ is not recursively saturated. 
	Let $p(x)$ be a recursive type using parameters from $\bar{a}$ which is omitted in $\mathcal{M}$. Let $(\phi_i(x,\bar{y}))_i$ be an arithmetically representable enumeration of formulae in $p(x)$. Without loss of generality, assume that $\phi_0(x,\bar{y}) = (x=x)$. Let 
	\[\psi_i(x,\bar{y}) = \bigwedge_{j< i}\phi_j(x,\bar{y})\wedge\neg \phi_i(x,\bar{y})\]
	Then every $b\in M$ satisfies exactly one of $\psi_i(x,\bar{a})$ (since $p(x)$ is omitted). 
	Now, for every $n\in\omega$ we shall define formulae $\theta_n(x)$ as follows:
	\begin{eqnarray}
	\theta_n^0(x) &=& (x\neq x)\nonumber\\
	\theta_n^{k+1}(x,\bar{y}) &=&\psi_{n-(k+1)}(x,\bar{y})\vee \theta_n^{k}(x,\bar{y})\nonumber\\
	\theta_n(x,\bar{y}) &=& \theta^{n}_n(x,\bar{y})\nonumber					
	\end{eqnarray}
	Let us observe that the above construction can be arithmetized and therefore for some $b\in M\setminus\mathbb{N}$ there exists a (code of a) formula $\theta_b(x,\bar{y})$, which is of the following form:
	\[(\psi_0(x,\bar{y}) \vee (\psi_1(x,\bar{y})\vee (\psi_2(x,\bar{y})\vee (\ldots(\psi_{b-1}(x,\bar{y}) \vee x\neq x)\ldots)\]
	Then for each $c\in M$, there exists $n\in\omega$ such that $\theta_b(\num{c},\bar{\num{a}})\in \Gamma^{\mathcal{M}}_n$, since each $c$ satisfies some $\psi_i(x,\bar{a})$. But also for every $i\in\omega$, there exists $c\in M$ such that the least $n$ for which $\psi_n(c,\bar{a})$ is greater than $i$.
	Consequently, there is no $k\in\omega$ for which 
	\[\theta_b(c,\bar{\num{a}})\in\Gamma_k^{\mathcal{M}}\]
	for every $c\in M$. In particular, $\forall v\theta(v,\bar{\num{a}})\notin \Gamma_{\omega}^{\mathcal{M}}$ and consequently the $\PT^-$ axiom
	$$\forall v \forall \phi(v)\ \ \bigl(T(\forall v\phi)\equiv \forall x T(\phi(\num{x}))\bigr)$$
	is not satisfied in $(\mathcal{M},\Gamma^{\mathcal{M}}_{\omega})$. Hence $\alpha_{\mathcal{M}}\neq \omega$.
\end{proof}

\section{Models of $\pt^- + \inter$}\label{sect_int}

Since we have shown that $\PTtot$ is not a model-theoretically weak theory, as was originally hoped, one could start wondering whether it differs in some significant respect from $\PTint.$ In this section, we will show that actually this is the case. Namely, it turns out that $\PTint$ is still model-theoretically stronger than $\PTtot.$ As we shall see, any model of $\PA$ expandable to a model of $\PTint$, is also expandable to a model of $\UTB.$ We know that any model of $\PA$ expandable to a model of $\UTB$ is recursively saturated and that this  containment is strict, i.e., not every recursively saturated model of $\PA$ admits an expansion to a model of $\UTB.$\footnote{\poprawka{We know that there exist rather classless recursively saturated models of $\PA$, i.e., recursively saturated models $M \models \PA$ with the following property: for every $X \subseteq M$ such that every initial segment of $X$ is coded $M$, the set $X$ is definable in $M$ with an arithmetical formula (with parametres). Since no subset of $M$ definable with an arithmetical formula can satisfy $\UTB^-$, we see that no such model $M$ can admit an expansion to a model of $\UTB$. The existence of recursively saturated, rather classless models has been shown by Kaufmann in \cite{kaufmann} under an additional set-theoretic assumption $\diamond$. The assumption has been dropped by Shelah, \cite{shelah}, Application C, p. 74.}}
 On the other hand, it has been shown in \cite{CWL}, Theorem 3.3 that any recursively saturated model of $\PA$ admits an expansion to a model of $\PTtot.$

\begin{tw} \label{tw_ptint_a_utb}
	Suppose that $(M,T)$ is a model of $\PTint$. Then there exists a $T'$ such that $(M,T') \models \UTB.$
\end{tw}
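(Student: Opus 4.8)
The plan is to leave the predicate $T$ unchanged on instances of standard formulae and to manufacture the missing induction by \emph{internalising} $\mathcal{L}_T$-induction into instances of the single axiom $\inter$. First note that by Fact \ref{utb_w_pt_wpt} we already have $(M,T)\models\UTB^-$, so $T$ decides every standard formula two-valuedly: for a standard $\phi$ one has $T(\phi(\bar t))\equiv\phi(\val{\bar t})$ and hence $T(\neg\phi(\bar t))\equiv\neg T(\phi(\bar t))$. The sought $T'$ is therefore forced to agree with $T$ on all standard instances, which secures disquotation for free; the entire difficulty is to arrange $T'$ (possibly revising $T$ on nonstandard codes) so that $(M,T')$ satisfies full $\mathcal{L}_{T'}$-induction. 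This is genuinely necessary: since $\PT^-$ carries induction for arithmetical formulae only, $(M,T)$ need not satisfy induction for $\mathcal{L}_T$-formulae in which $T$ occurs under a negation applied to a \emph{nonstandard} formula, so $T'=T$ cannot simply be taken.

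Next I would show that $M$ is recursively saturated. This repeats the argument of Theorem \ref{tw_srecsat_to_petetot} using the Smith alternatives of Definition \ref{df_alternatives_stopping} and Lemma \ref{lem_total_stopping_disjunction}, but invoking the \emph{full} internal induction $\inter$ in place of its restriction $\intot$. Given a recursive type $p(x)$ finitely realised in $M$, one forms the measuring formulae $\alpha_j,\beta_j$, the stopping disjunction $\phi(x,y)=\bigvee_{i=0}^{c,\alpha(x)}\beta_i(y)$ for a nonstandard $c$, and $\psi(z)=\exists y\,\forall x<z\,\phi(x,y)$; assuming $p$ omitted one checks that $\psi$ is progressive and that its base instance is true, so that $\inter$ applied directly to $\psi$ yields $\forall z\,T\psi(\num z)$ and hence a realiser of $p$, a contradiction. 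Because $\inter$ is available for all formulae, the totality checks and the boundedness hypothesis needed in Theorem \ref{tw_srecsat_to_petetot} can both be dropped, giving full recursive saturation.

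With recursive saturation in hand I would obtain full induction for $(M,T')$ by the following mechanism. For each standard $\mathcal{L}_{T'}$-formula $\chi(x,\bar p)$ I want to produce an $M$-coded arithmetical formula $\sigma_\chi(v)$, with parameters coding $\bar p$, such that $(M,T)\models\forall a\,\bigl(\chi(a,\bar p)\equiv T(\sigma_\chi(\num a))\bigr)$ along the relevant range. Granting this, $\chi$ is progressive exactly when $T(\sigma_\chi(\cdot))$ is, and the instance $\inter(\sigma_\chi)$ then delivers $\forall a\,\chi(a,\bar p)$; thus every instance of induction holds in $(M,T')$, and together with $\UTB^-$ this gives $(M,T')\models\UTB$. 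The coded formula $\sigma_\chi$ is supplied by recursive saturation: for a fixed standard $\chi$, the statements asserting that a candidate $\sigma$ agrees with $\chi$ up to each finite syntactic depth form a recursive type in the parameters, which is realised in $M$.

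The main obstacle is precisely this internalisation. A naive compositional translation of $\chi$ into the object language breaks down because $T$ is only a \emph{positive} truth predicate: $\neg T'$ cannot be traded for $T'$ of a negation at arbitrary subformulae, and exactly this failure (for nonstandard formulae) is what makes $T'=T$ insufficient. The point that makes the construction go through is the two-valuedness of $T$ on standard formulae recorded in the first paragraph: every occurrence of $T'$ inside a \emph{standard} $\chi$ is ultimately applied to standard formulae, where $T'(\neg\,\cdot)\equiv\neg T'(\cdot)$ holds, so $\chi$ can be rewritten within the positive fragment before being internalised, after which the compositional axioms of $\PT^-$ guarantee that $\sigma_\chi$ tracks $\chi$ correctly. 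Verifying this translation uniformly, and checking that the resulting $T'$ interprets the predicate consistently across all of $\Sent_{M}$ while still agreeing with $T$ on standard instances, is where the real work lies.
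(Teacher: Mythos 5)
Your proposal takes a genuinely different route from the paper's, and at its central step there is a gap that I do not think can be repaired as stated. The paper does not pass through recursive saturation at all: it \emph{defines} the new predicate explicitly as $T'(\phi)=\exists t\ T(\tau(\num{\phi},\num{t},\num{b}))$, where $\tau$ is a single \emph{nonstandard coded arithmetical formula} built from the stopping disjunction of Definition \ref{df_alternatives_stopping} over a primitive recursive enumeration of all standard formulae, and $b$ is a nonstandard parameter obtained by internal induction plus underspill so that $\tau$ is total and consistent (Lemma \ref{lem_tot_cons}). The payoff of giving $T'$ this shape is that for a standard formula $\eta$ with a fresh predicate $P$, the $\mathcal{L}_T$-formula $\eta[T']$ coincides with $T(\eta[\tau])$ by generalised commutativity (Lemma \ref{lem_gen_commutativity}), and $\eta[\tau]$ is an arithmetical formula \emph{in the sense of $M$}; hence the instance of $\inter$ for $\eta[\tau]$ literally \emph{is} the induction axiom for $\eta[T']$. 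That is the mechanism your sketch is missing: you gesture at ``internalising'' $\chi$ into a coded formula $\sigma_\chi$, but you have no construction of $\sigma_\chi$ playing the role of $\eta[\tau]$.

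The substitute mechanism you offer does not work. First, the claim that every occurrence of $T'$ inside a standard $\mathcal{L}_T$-formula is ``ultimately applied to standard formulae'' is false: in an induction instance such as $\chi(x)=\exists \phi\,(\phi<x\wedge T'(\phi))$ the argument of $T'$ ranges over all of $M$, including nonstandard codes, which is exactly where positivity bites; so the proposed rewriting into the positive fragment is unavailable. Second, the appeal to recursive saturation to realise the type describing $\sigma_\chi$ is illegitimate: that type mentions $T$, and recursive saturation of $M$ as a model of $\PA$ says nothing about types in the expanded language, while $(M,T)$ itself is not assumed recursively saturated. Third, even the finite realisability of that type --- that for each finite depth some coded arithmetical formula tracks $\chi$ --- is essentially the theorem restated rather than a step. (Your opening observation is not wrong: the paper's $T'$ does satisfy $\UTB^-$ and hence agrees with $T$ on standard instances, by Lemma \ref{lem_utb-}; but agreement on standard instances is very far from determining $T'$, and the real content lies in the choice of $\tau$ and of the parameter $b$.)
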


\begin{proof}
	Let $(M,T) \models \PTint$. We will find $T'$ such that $(M,T') \models \UTB.$ Without loss of generality we may assume that $M$ is nonstandard. As in the previous section, we will use Lemma \ref{lem_total_stopping_disjunction}. Let us fix any primitive recursive enumeration $(\phi_i)_{i =0}^{\infty}$ of arithmetical formulae. Then let 
	\begin{displaymath}
	\alpha'_i(\phi,t)
	\end{displaymath}
	 be defined as the (formalised version of the) formula "$t$ is a (finite) sequence of terms $(t_1, \ldots, t_n)$ and $\phi = \phi_i(t_1, \ldots, t_n)$" and let 
	 \begin{displaymath}
	 \alpha_i(\phi,t,b) = \alpha_i'(\phi,t) \vee \num{i}>b.
	 \end{displaymath}
	Let 
	\begin{displaymath}
	\beta'_i(t)
	\end{displaymath}
	be defined as ''$t$ is a (finite) sequence of terms $t_1, \ldots, t_n$ and $\phi_i(t_1, \ldots,t_n)$.'' Let
	\begin{displaymath}
	\beta_i(t,b)
	\end{displaymath}
	 be $\beta'_i(t) \wedge \num{i} \leq b.$ Note that $\phi$ is not a free variable of the formula $\beta_i.$ 
	Let us fix any nonstandard $c \in M$ and let
	\begin{displaymath}
	\tau(\phi,t,b) = \bigvee_{i=0}^{\alpha(\phi,t,b), c} \beta_i(t).
	\end{displaymath}
	
	Note that for any standard $c$ the predicate $\tau$ is equivalent to the very simple arithmetical truth predicate:
	\begin{displaymath}
	\tau_n(\phi,t,b) = \bigvee_{i=0}^{n} \phi= \phi_i(t) \wedge \phi_i(t) \wedge \num{i}<b.
	\end{displaymath}
	At this point one may wonder, what is the role of the variable $b.$ It is indeed technical. We artificially 
	truncate our truth predicates so that they work only for the first $b$ formulae. \poprawka{This is to some extent controlled by the parameter $c$ in the definition of $\tau$, since whenever $c$ is standard, the formula $\tau$ works like a truth predicate only for the first $c$ sentences. However, $c$ is not a variable in the formula $\tau$, but rather a parameter describing the syntactic shape of $\tau$, whereas we need this truncation to be expressed with a variable for reasons which will shortly become clear.}  
	
	It turns out that for some parameter $b$ the formula given by
	\begin{displaymath}
	T'(\phi) =\exists t \ T (\tau(\num{\phi},\num{t},\num{b}))
	\end{displaymath} 
	 satisfies $\UTB$. We will prove this claim in a series of lemmata. This will obviously conclude our proof. 
\end{proof}

\begin{lemat} \label{lem_utb-}
	Let $\tau'(\phi,t) = \tau(\phi,t,b)$ for some fixed nonstandard $b$. Then for an arbitrary standard arithmetical formula $\phi(v_1,\ldots,v_n)$ and an arbitrary sequence of terms $t=(t_1,\ldots, t_n)$,  possibly nonstandard (the length of the sequence is assumed to be standard).
	\begin{displaymath}
	(M,T) \models T\tau'(\num{\phi(t_1, \ldots, t_n)},\num{t}) \equiv \phi(t_1, \ldots, t_n).
	\end{displaymath}
\end{lemat}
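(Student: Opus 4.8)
The plan is to unwind the construction of $\tau'$ using Lemma \ref{lem_total_stopping_disjunction}, after first checking that its hypotheses are met. Fix a standard arithmetical formula $\phi(v_1,\ldots,v_n)$ and a (possibly nonstandard) sequence of terms $t = (t_1,\ldots,t_n)$ of standard length. Since the enumeration $(\phi_i)$ is primitive recursive and $\phi$ is standard, there is a \emph{standard} index $i_0$ with $\phi = \phi_{i_0}$. The first step is to locate, inside $(M,T)$, the least $j$ for which $T(\alpha_j(\qcr{\phi(t_1,\ldots,t_n)}, \qcr{t}, \qcr{b}))$ holds, and to argue that this $j$ is standard and in fact equals $i_0$. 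Here I would use that $\alpha_i = \alpha_i' \vee \num{i} > b$, that $b$ is nonstandard (so $\num{i} > b$ is false for all standard $i$), and that $\alpha_i'$ asserts $\phi = \phi_i(t_1,\ldots,t_n)$, which for standard $i$ is decided truthfully by $\UTB^-$ (Fact \ref{utb_w_pt_wpt}, available since $\PTint$ extends $\PT^-$); thus $\alpha_{i_0}'$ is true and $\alpha_i'$ is false for standard $i \neq i_0$, forcing $j = i_0$.

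Next I would verify the second hypothesis of Lemma \ref{lem_total_stopping_disjunction}, namely that for every $k \leq j = i_0$ either $T(\beta_k(\qcr{t}))$ or $T(\neg\beta_k(\qcr{t}))$ holds. Since $i_0$ is standard, each such $k$ is standard, and $\beta_k(t) = \beta_k'(t) \wedge \num{k} \leq b$; the sentence $\beta_k'(t)$ is a standard arithmetical formula applied to the terms $t$, and $\num{k} \leq b$ is true because $k$ is standard and $b$ nonstandard. By $\UTB^-$ every such standard instance is totally decided by $T$, so totality below $i_0$ holds. With both hypotheses in place, clause (1) of Lemma \ref{lem_total_stopping_disjunction} gives
\begin{displaymath}
(M,T) \models T\bigl(\tau'(\qcr{\phi(t_1,\ldots,t_n)}, \qcr{t})\bigr) \equiv T\bigl(\beta_{i_0}(\qcr{t})\bigr).
\end{displaymath}
It then remains to evaluate the right-hand side: $\beta_{i_0}(t) = \beta_{i_0}'(t) \wedge \num{i_0} \leq b$, and since $\num{i_0} \leq b$ holds and $\beta_{i_0}'(t)$ literally says $\phi_{i_0}(t_1,\ldots,t_n) = \phi(t_1,\ldots,t_n)$, one more appeal to $\UTB^-$ yields $T(\beta_{i_0}(\qcr{t})) \equiv \phi(t_1,\ldots,t_n)$, which chains with the displayed equivalence to give the lemma.

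The main obstacle I anticipate is the careful bookkeeping of the syntactic substitution conventions: the stopping disjunction $\bigvee_{i=0}^{\alpha,c}\beta_i$ is built with $\alpha_i$ depending on the parameters $\phi,t,b$ that are themselves being substituted as numerals, and one must make sure that evaluating $\tau'$ at the numeral $\qcr{\phi(t_1,\ldots,t_n)}$ genuinely matches $\phi = \phi_i(t_1,\ldots,t_n)$ in the sense computed by $\alpha_i'$, so that the index singled out really is $i_0$. In particular one must confirm that the nonstandard cutoff $c$ is harmless here: because the least true $\alpha_j$ occurs at the standard stage $i_0 \ll c$, the nonstandard tail of the disjunction never contributes, which is exactly the scenario Lemma \ref{lem_total_stopping_disjunction} is designed to handle. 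Once this matching is pinned down, the remaining steps are routine applications of $\UTB^-$ to standard formulae, and the equivalence follows by transitivity.
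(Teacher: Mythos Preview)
Your argument is correct and follows the same approach as the paper's proof, only spelled out in more detail: the paper simply observes that $\phi = \phi_i$ for some standard $i$, invokes Lemma \ref{lem_total_stopping_disjunction}, and reads off $T\tau' \equiv \beta_i(t) \equiv \phi_i(t_1,\ldots,t_n)$, whereas you explicitly verify the hypotheses of that lemma and unwind $\beta_{i_0}$. One small caveat: your claim that the least $j$ with $T(\alpha_j)$ is \emph{exactly} $i_0$ is slightly stronger than needed and not quite justified---in principle some $j < i_0$ could satisfy $\phi(t_1,\ldots,t_n) = \phi_j(t)$ as codes---but this is harmless, since any such $j$ is still standard and $\beta_j(t)$ then asserts the truth of the very same sentence, so the equivalence goes through unchanged.
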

\begin{proof}
	If $\phi$ is standard, then $\phi = \phi_i$ for some standard $i.$ So by Lemma \ref{lem_total_stopping_disjunction}
	\begin{displaymath}
	(M,T) \models T\tau'(\num{\phi(t_1, \ldots, t_n)},\num{t}) \equiv \beta_i(t) \equiv \phi_i(t_1, \ldots, t_n),
	\end{displaymath}
	which is exactly the claim of the lemma.
\end{proof}

Note that the above lemma is true in pure $\PT^-$. We have used no induction at all. Now we only need to check that for some parameter $b$ the predicate $T'(\phi, t)$ defined as $T \tau(\num{\phi}, \num{t},\num{b})$ is fully inductive.

\begin{lemat} \label{lem_tot_cons}
	Let $T'$ be defined as in the above proof. Then for some $b$, the formula $\tau'(\phi,t) = \tau(\phi,t,\num{b})$ 
	is total and consistent i.e. for all $\phi$ and $t$, exactly one of $T\tau'(\num{\phi},\num{t}),$  $T \neg \tau'(\num{\phi}, \num{t})$ holds.
\end{lemat}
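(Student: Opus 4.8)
The plan is to compute the $T$-value of $\tau(\phi,t,\num b)$ directly from the backward recursion defining the stopping disjunction (Definition \ref{df_alternatives_stopping}), and then to choose a single nonstandard $b$ for which this value is decided and coherent for every $\phi,t$. First I would note that each $\alpha_i(\phi,t,b)$ is a substitution instance of one fixed standard arithmetical formula, namely $\alpha'(\phi,t,z)\vee z>w$; hence Fact \ref{utb_w_pt_wpt} ($\UTB^-$, available already in $\PT^-$) gives $(M,T)\models T\alpha_i(\num\phi,\num t,\num b)\equiv\bigl((\phi=\phi_i(t))\vee i>b\bigr)$ for every, possibly nonstandard, $i$, so that every $\alpha_i$ is $T$-decided. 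Writing $D_k$ for the partial disjunction $\bigvee_{i=k}^{c,\alpha}\beta_i$, the compositional axioms of $\PT^-$ then yield, uniformly in $k$, the single-step equivalences $T\alpha_k\to(TD_k\equiv T\beta_k)$ and $T\neg\alpha_k\to(TD_k\equiv TD_{k+1})$ (together with the corresponding equivalences for $T\neg D_k$). Thus the value of $\tau=D_0$ is governed by $\beta_{j^*}$, where $j^*=\min(j_\phi,b+1)$ is the least index with $T\alpha_{j^*}$ and $j_\phi$ is the least $i$ with $\phi=\phi_i(t)$; moreover $\beta_{j^*}$ is $T$-false (hence decided and consistent) when $j^*=b+1$, whereas $\beta_{j^*}\equiv\phi$ when $j^*=j_\phi\le b$.

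For standard $\phi$ — indeed whenever $j_\phi$ is standard — the index $j^*$ is standard, so Lemma \ref{lem_total_stopping_disjunction} applies verbatim and $\tau(\phi,t,\num b)$ is total and consistent; this is exactly the reasoning already isolated in Lemma \ref{lem_utb-}. In particular, for every standard $b$ the bound $j^*\le b+1$ is standard for all $\phi,t$, so $\tau(\cdot,\cdot,\num b)$ is total and consistent for all standard $b$. It is convenient to phrase the goal arithmetically: using $T(\psi\vee\neg\psi)\equiv T\psi\vee T\neg\psi$, $T(\psi\wedge\neg\psi)\equiv T\psi\wedge T\neg\psi$ and the quantifier axioms of $\PT^-$, ``totality of $\tau'$ at $b$'' becomes $T\Sigma(\num b)$ and ``consistency of $\tau'$ at $b$'' becomes $\neg T\Xi(\num b)$, where $\Sigma(v)=\forall\phi\forall t\,(\tau(\phi,t,v)\vee\neg\tau(\phi,t,v))$ and $\Xi(v)=\exists\phi\exists t\,(\tau(\phi,t,v)\wedge\neg\tau(\phi,t,v))$ are fixed arithmetical formulae in the single free variable $v$ (with $c$ as a parameter). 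It therefore suffices to exhibit one nonstandard $b$ with $T\Sigma(\num b)$ and $\neg T\Xi(\num b)$.

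The main obstacle — and the place where the hypothesis $\inter$ is indispensable — is precisely the passage from ``all standard $b$'' to a nonstandard witness. Both $\{b:T\Sigma(\num b)\}$ and $\{b:\neg T\Xi(\num b)\}$ are downward closed and contain every standard number, but they are $\Lang_T$-definable, so ordinary overspill is unavailable; this reflects the genuine gap in the evaluation above, for when $j^*$ is nonstandard, collapsing the chain $TD_0\equiv TD_1\equiv\cdots\equiv TD_{j^*}$ is an $\Lang_T$-induction that pure $\PT^-$ does not provide (this is exactly why Lemma \ref{lem_total_stopping_disjunction} is limited to a standard stopping index, and why Lemma \ref{lem_utb-} used no induction at all). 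This is where the truncation carried by the variable $b$ does its work. The delicate step I expect to be hardest is to package the nonstandard chaining as an instance of internal induction on an appropriately chosen arithmetical formula — so that $\inter$ both validates the evaluation $TD_0\equiv T\beta_{j^*}$ for nonstandard $j^*$ and lets one overspill past $\mathbb{N}$ to a nonstandard $b$ such that every sentence of index at most $b$ is $T$-decided and $T$-consistent. Once such a $b$ is fixed, the search terminates for every $\phi,t$ either at a standard index, or at $b+1$ with a $T$-false $\beta_{b+1}$, or at some $j^*\le b$ where $\beta_{j^*}\equiv\phi$ is decided and consistent; totality and consistency of $\tau'$ then follow simultaneously, since the recursion pins $D_0$ to the single value coming from $\beta_{j^*}$.
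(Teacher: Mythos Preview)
Your reduction is correct: you rightly observe that $\tau(\cdot,\cdot,\num{b})$ is total and consistent for every standard $b$, you correctly rewrite the goal as exhibiting a single nonstandard $b$ with $T\Sigma(\num{b})$ and $\neg T\Xi(\num{b})$, and you correctly flag that $\inter$ is the tool needed to cross from $\omega$ to a nonstandard witness. The gap is that you never carry out that step, and the direction you sketch for it is the wrong one. You propose that $\inter$ should ``validate the evaluation $TD_0\equiv T\beta_{j^*}$ for nonstandard $j^*$'' and produce a $b$ such that ``every sentence of index at most $b$ is $T$-decided and $T$-consistent''. Neither works. The first would require inducting on a statement of the shape ``$TD_0\equiv TD_k$'', which already contains the truth predicate and is therefore not an instance of $\inter$. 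The second is not an arithmetical condition on $b$ at all: asserting that $\phi_i$ is $T$-decided for every $i\le b$ needs a truth predicate \emph{inside} the quantifier over $i$, so there is no arithmetical $\chi(v)$ with $T\chi(\num{b})$ expressing it. Your final paragraph then becomes circular: the claim that ``the recursion pins $D_0$ to $\beta_{j^*}$'' for nonstandard $j^*$ is precisely the nonstandard chaining you yourself identified as unavailable in bare $\PT^-$.

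The paper avoids all of this by applying $\inter$ \emph{outside} $\tau$, in the variable $b$, to the very formulae you already wrote down. Since $T\bigl(\forall b<v\ \Sigma(b)\bigr)(\num{n})$ holds for each standard $n$ and trivially at $0$, the contrapositive of $\inter$ yields either $T\bigl(\forall b<v\ \Sigma(b)\bigr)(\num{m})$ for all $m$, or a break point $m_0$ with $T\bigl(\forall b<v\ \Sigma(b)\bigr)(\num{m_0})$ and failure at $m_0+1$; since all standard $m$ lie on the ``true'' side, $m_0$ must be nonstandard. Either way one obtains a nonstandard $d_1$ with totality for all $b\le d_1$. A dual underspill, again via $\inter$ (now with the parameter $d_1$), produces a nonstandard $b\le d_1$ with $\neg T\Xi(\num{b})$. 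At that point you are finished: $T\Sigma(\num{b})$ and $\neg T\Xi(\num{b})$ \emph{are} totality and consistency of $\tau'$, so no further analysis of the stopping disjunction at nonstandard indices is needed. (Incidentally, your assertion that $\{b:T\Sigma(\num{b})\}$ and $\{b:\neg T\Xi(\num{b})\}$ are downward closed is itself unjustified without the nonstandard chaining; the paper sidesteps this by working with the automatically downward-closed formula $\forall b<v\ \Sigma(b)$.)
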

\begin{proof}
	Note first that for any standard $b$, the formula $\tau(\phi,t,\num{b})$ is total and consistent. Namely, since $\alpha_i(\phi,t,n)$ is true for any $i > n$,
	 we see that for any $\phi,t$ the least $i$ such that $\alpha_i(\phi,t,n)$ holds is standard (it is at most $n+1$) and then the assumptions of Lemma \ref{lem_total_stopping_disjunction} are satisfied. This implies that for any fixed $\xi$ the formula $T\tau(\num{\xi},\num{t},\num{n})$ is equivalent to some $\phi_i(\val{t}) \wedge \num{i} \leq\num{n}$,
	  which is a standard formula. This implies that for any $t$, exactly one of $T\tau(\num{\xi},\num{t},\num{n}), T\neg \tau(\num{\xi},\num{t},\num{n})$ holds. 
	
	Now, consider the formula
	\begin{displaymath}
	\psi(b) = \forall \phi,t \ \Big(\tau(\phi,t,b) \vee \neg \tau(\phi,t,b)\Big).
	\end{displaymath}
	We have just shown that for an arbitrary standard $n$ we have $T\forall b<\num{n} \  \psi(b).$
	 So by internal induction we have for some nonstandard $d_1$
	\begin{displaymath}
	T \bigg(\forall b\leq \num{d_1} \forall \phi,t \ \Big(\tau(\phi,t,b) \vee \neg \tau(\phi,t,b)\Big)\bigg),
	\end{displaymath}
	which gives
	\begin{displaymath}
	\forall b\leq d_1 \forall \phi,t \ \Big(T\tau(\num{\phi},\num{t},\num{b}) \vee T\neg \tau(\num{\phi},\num{t},\num{b})\Big)
	\end{displaymath}
	
	Similarly, let 
	\begin{displaymath}
	\xi(b) =  \exists \phi,t \ \Big(\tau(\phi,t,b) \wedge \neg \tau(\phi,t,b)\Big).
	\end{displaymath}
	Suppose that $T (\exists d<\num{b} \ \xi(d))$ holds for any nonstandard $b<d_1$. Then by underspill we would have $T\xi(\num{n})$ for some $n \in \omega.$ But we have just shown that this is impossible. So there exists some nonstandard $b<d_1$ such that for any $d \leq b$ and any $\phi,t$ at most one of $T\tau(\num{\phi},\num{t},\num{d}), T \neg \tau(\num{\phi},\num{t},\num{d})$ holds. At the same time, we know that at least one of these formulae holds. So $\tau'(\phi,t) = \tau(\phi,t,b)$ is total and consistent. 
\end{proof}

We are very close to showing that we have defined a predicate satisfying full induction. Before we proceed, we have to introduce some new notation. Let $\eta$ be any formula containing a unary predicate $P$ not in the language of $\PT^-$ and let $\xi(v)$ be an arbitrary formula with one free variable. Then by $\eta[\xi/P]$ (or simply $\eta[\xi]$) we mean a formula resulting from substituting $\xi(v_i)$ for any instance of $P(v_i)$ in $\eta$. We assume that all the variables in $\eta$ has been renamed so as to avoid clashes.

Let us give an example. Let $\eta(x,y) = P(x+y) \wedge \exists z \ (z=y \wedge P(z))$. Let $\xi(v) = (v>0)$. Then 
\begin{displaymath}
\eta[\xi] = x+y>0 \wedge \exists z \ (z=y \wedge z>0).
\end{displaymath}

Now, basically, we would like to finish the proof in the following way. Let $\tau'$ be a total formula defined as in the above lemmata and let $\eta$ be an arbitrary standard formula from the arithmetical language enlarged with a fresh unary predicate $P$. Then, applying compositional axioms a couple of times, we see that
\begin{displaymath}
T(\eta[\tau]) \equiv \eta[T \tau].
\end{displaymath}
Let us call this principle the \df{generalised commutativity}. If this were true, then we could conclude our proof. Namely, by the internal induction principle, we know that
\begin{displaymath}
\bigl(\forall x \bigl(T(\eta[\tau](\num{x}))\rightarrow T(\eta[\tau](\num{x+1}))\bigr)\longrightarrow \bigl(T(\eta[\tau](\num{0}))\rightarrow \forall x T(\eta[\tau](\num{x}))\bigr)
\end{displaymath}
which, by generalised commutativity, would allow us to conclude that
\begin{displaymath}
\bigl(\forall x \bigl(\eta[T\tau](\num{x})\rightarrow (\eta[T\tau](\num{x+1})\bigr)\bigr)\longrightarrow \bigl(\eta[T\tau](\num{0})\rightarrow \forall x \eta[T\tau](\num{x})\bigr).
\end{displaymath}
Since the choice of $\eta$ was arbitrary, this precisely means that $\tau$ satisfies the full induction scheme. 

\poprawka{The generalised commutativity principle in the form stated above does not even quite make sense, since we would have to apply the truth predicate to a formula containing free variables. Therefore, we have to restate it in somewhat more careful manner. }

\begin{definicja}
	Fix a unary predicate $P$. Let $\eta$ be an arbitrary formula from the language containing that predicate. We say that $\eta$ is in \df{semirelational} form if the predicate $P$ is applied only to variables rather than to arbitrary terms.
\end{definicja}

We may always assume that formulae we \emph{use} are semirelational, \poprawka{since we may eliminate any occurrence of $P(t)$ for complex terms $t$, by replacing it with $\exists x \ (x = t \wedge P(x))$. This is expressed in the following lemma:}

\begin{lemat} \label{lem_semirelational}
	Any formula is equivalent in first-order-logic to a formula in semirelational form.
\end{lemat}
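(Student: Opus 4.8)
The plan is to proceed by induction on the complexity of $\eta$, pushing every application of $P$ down onto a variable by means of the logical equivalence
\begin{displaymath}
P(t) \equiv \exists x \ (x = t \wedge P(x)),
\end{displaymath}
which is valid in first-order logic for any term $t$ and any variable $x$ not occurring in $t$: in every structure the right-hand side asserts exactly that $P$ holds of the value of $t$, which is the content of the left-hand side. The whole lemma is then just the observation that this local rewriting can be carried out everywhere that $P$ is applied to a compound term, while the surrounding logical structure is left untouched.

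First I would treat the atomic base case. If $\eta$ is atomic and does not contain $P$ (e.g. an identity $s=t$), it is already semirelational and nothing is done. If $\eta = P(t)$ with $t$ a variable, it is again already semirelational. The only remaining atomic case is $\eta = P(t)$ with $t$ a compound term, and here I replace $\eta$ by $\exists x \ (x = t \wedge P(x))$ for a fresh variable $x$; by the displayed equivalence this preserves logical equivalence, and the result applies $P$ only to the variable $x$. For the inductive step I would use that first-order provable equivalence is a congruence with respect to the connectives and quantifiers: if $\eta$ is $\neg\theta$, $\theta_1\vee\theta_2$, or $\exists v \ \theta$, I apply the induction hypothesis to the immediate subformulas, obtain equivalent semirelational formulas, and recombine them under the same connective or quantifier. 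The outermost logical operation introduces no new applications of $P$, so the recombined formula is semirelational and equivalent to $\eta$.

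The only point requiring genuine care --- and the one I would flag as the main, if modest, obstacle --- is the avoidance of variable capture. The fresh variable $x$ introduced in the atomic clause must be chosen distinct from every variable occurring in $\eta$ (in particular from the variables of $t$), so that the new existential quantifier neither captures a variable of $t$ nor is captured by a quantifier of the surrounding context. Since $P(t)$ and $\exists x \ (x=t\wedge P(x))$ then have precisely the same free variables, namely those of $t$, the congruence principle applies in every context in which $P(t)$ occurs, and the induction goes through without further complication.
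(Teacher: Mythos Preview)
Your proof is correct and follows exactly the approach the paper indicates: the paper does not give a detailed argument for this lemma but simply notes, in the sentence preceding it, that one may eliminate any occurrence of $P(t)$ for a complex term $t$ by replacing it with $\exists x \ (x = t \wedge P(x))$. Your inductive write-up and attention to variable capture supply the routine details the paper omits.
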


Now we are ready to state generalised commutativity lemma in a proper manner. 

\begin{lemat} \label{lem_gen_commutativity}
	Let $(M,T) \models \PT^-.$ Let $T * \xi(x) = T(\xi(\num{x}))$ for every $x.$ Suppose that $\xi$ is total and consistent. Let $\eta$ be an arbitrary standard formula from the arithmetical language extended with a fresh unary predicate $P$. Then the formula $\eta[\xi]$ is total and consistent, and
	\begin{displaymath}
	(M,T) \models \forall x_1, \ldots, x_n \big( T(\eta[\xi](\num{x_1},\ldots, \num{x_n})) \equiv \eta[T * \xi](x_1, \ldots, x_n)\big).
	\end{displaymath}
\end{lemat}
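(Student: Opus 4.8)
The plan is to argue by \emph{external} induction on the syntactic build-up of the standard formula $\eta$. This is the natural move because $\eta$, being standard, has finite (meta-theoretic) complexity, so we may recurse on its structure in the metatheory; crucially one must \emph{not} attempt an internal induction inside $M$, since $\PT^-$ proves no induction for $\Lang_T$-formulae and the statement is simply false for nonstandard $\eta$. By Lemma \ref{lem_semirelational} I would first replace $\eta$ by a logically equivalent semirelational formula, so that every occurrence of the fresh predicate $P$ has the shape $P(v_i)$ for a variable $v_i$; this guarantees that after substitution the atomic $P$-formulae become exactly $\xi(v_i)$. Since $\wedge$ and $\forall$ are treated as defined connectives, it suffices to run the induction through the atomic formulae and the three cases $\neg$, $\vee$, $\exists$.

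The second key idea is to strengthen the induction hypothesis, proving \emph{simultaneously} that (i) $\eta[\xi](\num{x_1},\ldots,\num{x_n})$ is total and consistent for all $x_1,\ldots,x_n$, and (ii) the commutativity equivalence $T(\eta[\xi](\num{x_1},\ldots,\num{x_n})) \equiv \eta[T*\xi](x_1,\ldots,x_n)$. In the base cases this is immediate: for an arithmetical equality atom $s=t$ we have $\eta[\xi]=\eta$, and the axioms $T(s=t)\equiv \val s=\val t$ and $T(\neg s=t)\equiv \val s\neq\val t$ deliver both totality/consistency and the equivalence (with $\eta[T*\xi]$ being just $\eta$ evaluated in $M$); for the atom $P(v_i)$ we have $\eta[\xi]=\xi(v_i)$, so (i) is precisely the hypothesis that $\xi$ is total and consistent, while (ii) holds by the very definition $T*\xi(x)=T(\xi(\num x))$.

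For the disjunction and existential steps the positive compositional axioms of $\PT^-$ commute $T$ in the forward direction directly: the disjunction axiom gives $T(\rho[\xi]\vee\sigma[\xi])\equiv T(\rho[\xi])\vee T(\sigma[\xi])$ and the existential axiom gives $T(\exists v\,\rho[\xi](v,\bar{\num x}))\equiv\exists m\, T(\rho[\xi](\num m,\bar{\num x}))$, so applying the induction hypothesis to the immediate subformulae yields (ii) at once. Preservation of (i) at these steps uses the \emph{negated} compositional axioms together with totality of the parts: e.g.\ if $T(\rho[\xi]\vee\sigma[\xi])$ fails then both disjuncts fail under $T$, so by totality of $\rho[\xi],\sigma[\xi]$ we get $T(\neg\rho[\xi])\wedge T(\neg\sigma[\xi])$, which is $T(\neg(\rho[\xi]\vee\sigma[\xi]))$; consistency is dual, and the existential case is analogous via the axiom $T(\neg\exists v\,\phi)\equiv\forall x\,T(\neg\phi(\num x))$.

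I expect the negation step to be the real crux, and it is exactly where the strengthened hypothesis earns its keep. Since $\PT^-$ deliberately omits the global commutation \eqref{equat_neg}, we cannot in general pass $T$ through $\neg$. However, once the induction hypothesis guarantees that $\rho[\xi](\bar{\num x})$ is total and consistent, these two facts say precisely that exactly one of $T(\rho[\xi])$, $T(\neg\rho[\xi])$ holds, hence $T(\neg\rho[\xi])\equiv\neg T(\rho[\xi])$ \emph{for this particular sentence}. Combining with (ii) for $\rho$ closes the equivalence, and the double-negation axiom $T(\neg\neg\phi)\equiv T(\phi)$ together with totality of $\rho[\xi]$ re-establishes (i) for $\neg\rho[\xi]$. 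The whole difficulty of the lemma is thus concentrated in recognising that totality and consistency must be carried as part of the inductive invariant; with that in place every step reduces to a single application of the relevant $\PT^-$ axiom.
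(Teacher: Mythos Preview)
Your proposal is correct and follows essentially the same route as the paper: an external induction on the complexity of the (semirelational) standard formula $\eta$, carrying totality and consistency of $\eta[\xi]$ as part of the inductive invariant so that the negation step goes through despite the absence of the global commutation axiom \eqref{equat_neg}. The paper's proof is organised identically (atomic cases $s=t$ and $P(x)$, then $\vee$, $\neg$, $\exists$), and your identification of the negation step as the place where the strengthened hypothesis is essential is exactly the point.
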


The lemma generalises to the case, where the predicate $P$ is not unary (i.e. $\xi$ may have more than one variable). The proof may be easily adapted to cover this case. We will actually use the lemma for the case with $P$ binary.

\begin{proof}
	We prove both claims simultanously by induction on complexity of $\eta.$ Suppose that $\eta$ is an atomic formula. Then it is either of the shape $s=t$ for some standard arithmetical terms $s$,$t$, or of the form $P(x)$.
	
	In the first case, $\eta[\xi] = \eta$, and the following equivalences hold:
	\begin{eqnarray*}
		T(s(\num{x_1}, \ldots, \num{x_n})=t(\num{x_1}, \ldots, \num{x_n})) & \equiv & s(\num{x_1}, \ldots, \num{x_n})^{\circ}=t(\num{x_1}, \ldots, \num{x_n})^{\circ} \\
		& \equiv & s(x_1, \ldots, x_n) = t(x_1, \ldots, x_n) \\
		& = & \eta[T*\xi](x_1, \ldots, x_n).
	\end{eqnarray*}
	
	If $\eta = P(x)$, then $\eta[\xi] = \xi$ and
	\begin{displaymath}
	T(\eta[\xi](\num{x})) =  T \xi(\num{x}) = \eta[T * \xi](x).
	\end{displaymath}
	
	So let prove the induction step. If $\eta$ is a conjunction or disjunction, then the proof is straightforward (the fact that a conjunction or disjunction of sentences which are either true or false is itself either true or false is an easy application of the compositional axioms of $\PT^-$). If $\eta = \neg \rho$, then we know by induction hypothesis that $\rho[\eta]$ is total and consistent. Then by the compositional axiom for double negation for the truth predicate, the formula $\neg \rho[\eta]$ is also total and consistent and the following equivalences hold:
	\begin{eqnarray*}
		T(\neg \rho[\eta](\num{x_1}, \ldots, \num{x_n})) & \equiv & \neg T(\rho[\eta](\num{x_1}, \ldots, \num{x_n})) \\
		& \equiv & \neg \rho[T * \eta](x_1, \ldots, x_n).
	\end{eqnarray*}
	
	The induction step for quantifier axioms is also simple. Let us prove it for the existential quantifier. Suppose that $\eta = \exists x \ \rho(x,x_1, \ldots, x_n).$ Then
	\begin{eqnarray*}
		T(\exists x \ \rho[\xi](x, \num{x_1}, \ldots, \num{x_n})) & \equiv & \exists x \ T(\rho[\xi](\num{x}, \num{x_1}, \ldots, \num{x_n}) \\
		& \equiv & \exists x \ \rho[T * \xi](x, x_1 \ldots,x_n) \\
		& = & \eta[T * \xi](x_1, \ldots, x_n).
	\end{eqnarray*}
	The second equivalence follows by the induction hypothesis and the last equality by definition. So let us check that $\eta[\xi]$ is total and consistent. Suppose that $T(\exists x \rho[\xi](x, \num{x_1}, \ldots, \num{x_n}))$ does not hold. Then by compositional axioms for the truth predicate, there is no $x$ such that
	\begin{displaymath}
	T(\rho(\num{x}, \num{x_1}, \ldots, \num{x_n})).
	\end{displaymath}
	By induction hypothesis, $\rho$ is total and consistent, so for all $x$ we must have
	\begin{displaymath}
	T (\neg \rho[\xi](\num{x}, \num{x_1}, \ldots, \num{x_n})).
	\end{displaymath}
	This entails, again by compositional clauses
	\begin{displaymath}
	T (\neg \exists x \ \rho[\xi](x, \num{x_1}, \ldots, \num{x_n})).
	\end{displaymath}
\end{proof}

Now we are ready to conclude the proof of our theorem.

\begin{lemat} \label{lem_indukcja_tauprim}
	Let $(M,T)$ be any nonstandard model of $\PTint.$ Suppose that $\tau'(\phi,t)$ satisfies the claim Lemma \ref{lem_tot_cons}.
	 Then the predicate $T'(\phi,t)$ defined as $T * \tau'(\phi,t)$ satisfies the full induction scheme. 
\end{lemat}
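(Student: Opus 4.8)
The plan is to assemble the two ingredients already in place: the internal induction axiom available in $\PTint$, and the generalised commutativity of Lemma \ref{lem_gen_commutativity}. Recall that here $T'$ is the \emph{binary} predicate $T * \tau'$, so I fix an arbitrary \emph{standard} formula $\eta(x,z_1,\ldots,z_m)$ in the arithmetical language enlarged with a fresh binary predicate $P$, and by Lemma \ref{lem_semirelational} I assume $\eta$ is semirelational, so that the binary version of Lemma \ref{lem_gen_commutativity} applies. Writing $\eta[T']$ for the result of substituting $T' = T*\tau'$ for $P$, the goal is to verify, for all parameters $\bar z$, the corresponding induction instance
\[
\Big(\forall x\, \big(\eta[T'](x,\bar z) \rightarrow \eta[T'](x+1,\bar z)\big)\Big) \rightarrow \Big(\eta[T'](0,\bar z) \rightarrow \forall x\, \eta[T'](x,\bar z)\Big),
\]
which is propositionally equivalent to the usual induction axiom for $\eta[T']$.

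Fixing $\bar z$ and substituting numerals, I would consider the formula $\eta[\tau'](x,\num{\bar z})$ coded in $M$. The crucial point is that, because $\tau'$ carries the nonstandard parameter $b$ from Lemma \ref{lem_tot_cons}, this is in general a \emph{nonstandard} formula, so no form of external induction is available. Instead I invoke the internal induction axiom \eqref{equat_ind}, which holds in $(M,T)$ since $(M,T)\models\PTint$ and which quantifies over \emph{all} formulae in the sense of $M$. Instantiating $\inter(\phi(x))$ with $\phi(x) := \eta[\tau'](x,\num{\bar z})$ yields
\[
\Big(\forall x\, \big(T(\eta[\tau'](\num x,\num{\bar z})) \rightarrow T(\eta[\tau'](\num{x+1},\num{\bar z}))\big)\Big) \rightarrow \Big(T(\eta[\tau'](\num 0,\num{\bar z})) \rightarrow \forall x\, T(\eta[\tau'](\num x,\num{\bar z}))\Big).
\]

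Then I would transport this back outside the truth predicate. Since $\tau'$ is total and consistent by Lemma \ref{lem_tot_cons} and $\eta$ is a standard semirelational formula, Lemma \ref{lem_gen_commutativity} gives, for all $x$ and $\bar z$,
\[
(M,T) \models T(\eta[\tau'](\num x,\num{\bar z})) \equiv \eta[T*\tau'](x,\bar z) = \eta[T'](x,\bar z).
\]
Substituting these equivalences throughout the displayed internal-induction statement turns it precisely into the induction instance for $\eta[T']$ written at the outset. As $\eta$ and $\bar z$ were arbitrary, $(M,T')$ satisfies the full induction scheme; combined with Lemma \ref{lem_utb-} (which provides $\UTB^-$ and carries over to the unary predicate $\phi \mapsto \exists t\, T'(\phi,t)$), this concludes the proof of Theorem \ref{tw_ptint_a_utb}.

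The main obstacle is conceptual rather than computational: one must recognise that $\eta[\tau']$ is a single nonstandard object, so the only way to push induction through is to move the entire induction instance \emph{inside} the truth predicate, run the internal induction axiom on that one formula, and only then move back \emph{out} via generalised commutativity. All the genuine difficulty—that $\eta[\tau']$ is total and that truth commutes with the finitely many standard connectives occurring in $\eta$—has already been discharged in Lemmas \ref{lem_tot_cons} and \ref{lem_gen_commutativity}, so the remaining work is chiefly the careful bookkeeping of parameters and numeral substitutions.
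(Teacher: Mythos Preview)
Your proposal is correct and follows essentially the same approach as the paper: instantiate the internal induction axiom with the (nonstandard) formula $\eta[\tau']$, then use generalised commutativity (Lemma \ref{lem_gen_commutativity}) together with totality and consistency of $\tau'$ to pull the truth predicate outside and obtain the external induction instance for $\eta[T']$. You are in fact slightly more explicit than the paper about the binary nature of $P$, the treatment of the parameters $\bar z$, and the reason the internal induction axiom (rather than any external induction) is needed; the final remark about $\UTB^-$ belongs to the conclusion of Theorem \ref{tw_ptint_a_utb} rather than to this lemma, but that is harmless.
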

\begin{proof}
	By internal induction principle, the following holds for an arbitrary standard 
	 $\eta$ from the arithmetical language extended with one fresh unary predicate $P(v)$:
	\begin{displaymath}
	\Bigl(\forall x \bigl(T(\eta[\tau](\num{x}))\rightarrow T(\eta[\tau](\num{x+1})\bigr)\Bigr)\longrightarrow \Bigl(T(\eta[\tau](\num{0}))\rightarrow \forall x T(\eta[\tau](\num{x}))\Bigr),
	\end{displaymath}
	Since $\tau'$ is total, if we additionally assume that $\eta$ is semirelational,
	we can reach the following conclusion by Lemma \ref{lem_gen_commutativity}:
	\begin{displaymath}
	\Bigl(\forall x \bigl(\eta[T * \tau](x)\rightarrow (\eta[T * \tau](x+1)\bigr)\Bigr)\longrightarrow \Bigl(\eta[T * \tau](0)\rightarrow \forall x \eta[T * \tau](x)\Bigr).
	\end{displaymath}
	Since $\eta$ was an arbitrary semirelational formula and any formula is equivalent to a semirelational one, this shows that $T'$ satisfies the full induction scheme.
\end{proof}

\begin{proof}[The conclusion of the proof of Theorem \ref{tw_ptint_a_utb}] 
	We have defined a formula $T * \tau'(\phi,t)$ which satisfies full induction scheme and such that for an arbitrary standard $\phi(v_1,\ldots,v_n)$ and an arbitrary sequence of terms $(t_1,\ldots, t_n)$ the following holds:
	\begin{displaymath}
	(M,T) \models T * \tau'(\phi(t_1,\ldots, t_n),t) \equiv \phi(t_1, \ldots, t_n).
	\end{displaymath}
	Then the formula  $T'(\phi)$ defined as 
	\begin{displaymath}
	\exists t \  T*\tau'(\phi,t) 
	\end{displaymath}
	satisfies the uniform disquotation axioms of $\UTB$ as well as the full induction scheme. So it defines a predicate satisfying $\UTB$ in $(M,T).$
\end{proof}

This model-theoretic result allows us to make some conclusions concerning relative definability of the introduced theories.
\begin{wniosek}
	The theory $\PTtot$ does not relatively truth defines $\PTint.$
\end{wniosek}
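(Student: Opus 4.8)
The plan is to deduce this corollary purely model-theoretically from Theorem \ref{tw_ptint_a_utb} together with the granularity implications \eqref{strength1}. First I would unwind the terminology, since the direction is easy to get backwards. By definition, the assertion ``$\PTtot$ relatively truth defines $\PTint$'' is exactly $\PTint \leq_F \PTtot$ (recall that ``$\Th_2$ relatively truth defines $\Th_1$'' is written $\Th_1 \leq_F \Th_2$). Thus the corollary claims $\PTint \nleq_F \PTtot$. By \eqref{strength1}, instantiated at $\Th_1 = \PTint$ and $\Th_2 = \PTtot$, we have $\PTint \leq_F \PTtot \implies \PTint \leq_M \PTtot$, so by contraposition it suffices to establish the coarser statement $\PTint \nleq_M \PTtot$. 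Concretely, I only need to produce a single model of $\PA$ that admits an expansion to a model of $\PTtot$ but admits no expansion to a model of $\PTint$.

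To find such a separating model I would combine three facts. By Theorem \ref{tw_ptint_a_utb}, every model expandable to $\PTint$ is also expandable to $\UTB$; contrapositively, any model that is \emph{not} expandable to $\UTB$ is a fortiori not expandable to $\PTint$. On the other hand, \cite{CWL} (Theorem 3.3) guarantees that every recursively saturated model of $\PA$ admits an expansion to a model of $\PTtot$. Hence it is enough to exhibit a recursively saturated model of $\PA$ that is not expandable to $\UTB$.

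Such a model is furnished precisely by the rather classless recursively saturated models of $\PA$, whose existence (Kaufmann \cite{kaufmann}, Shelah \cite{shelah}) is recorded in the footnote to this section: in such a model every class is arithmetically definable, and since no arithmetically definable subset of $M$ can satisfy $\UTB^-$, the model $M$ admits no expansion to a model of $\UTB$. Fixing any such $M$, we conclude that $M$ expands to a model of $\PTtot$ (being recursively saturated) but not to a model of $\PTint$ (not being expandable to $\UTB$). This single model witnesses $\PTint \nleq_M \PTtot$, whence $\PTint \nleq_F \PTtot$ by \eqref{strength2}, which is exactly the statement of the corollary.

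Since the substantive work has already been carried out in Theorem \ref{tw_ptint_a_utb}, I do not expect any genuine obstacle in the corollary itself; the only point requiring care is the bookkeeping of the definitions. In particular one must keep straight that ``$\PTtot$ relatively truth defines $\PTint$'' corresponds to $\PTint \leq_F \PTtot$, so that the desired non-definability translates into the correct direction of \eqref{strength2} and one produces a model separating $\PTint$ from $\PTtot$ rather than the reverse.
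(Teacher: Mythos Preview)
Your argument is correct and follows essentially the same route as the paper's own proof: both combine Theorem \ref{tw_ptint_a_utb} with the existence of rather classless recursively saturated models (Kaufmann--Shelah) and the fact from \cite{CWL} that every recursively saturated model expands to $\PTtot$, to produce a model expandable to $\PTtot$ but not to $\PTint$. Your version is in fact tidier, since you explicitly invoke the implication \eqref{strength1} to reduce $\nleq_F$ to $\nleq_M$, and you keep the direction of $\leq_F$ straight; the paper's proof contains a slip in its first sentence (it writes $\PTtot$ where $\PTint$ is meant) which you avoid.
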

\begin{proof}
	We have just checked that every model $(M,T) \models \PTtot$ may be expanded to a model of $\UTB$. By Theorem TODO, there exist recursively saturated rather classless models which cannot be expanded to any model of $\UTB.$ On the other hand in \cite{CWL}, Theorem 3.3, it has been shown that any recursively saturated model can be expanded to a model of $\PTtot.$ Thus, there exist models of $\PTtot$ which cannot be expanded to a model of $\PTint$. This contradicts relative definability.
\end{proof}

\section{Weak and Expressive Theories  of Truth}\label{sect_instrum}

In \cite{fischerhorsten}, the authors searched for a theory of truth that would simultaneously satisfy two requirements:\poprawka{
\begin{enumerate}
	\item It could model the use of truth in model theory;
	\item It would witness the expressive function of the notion of truth.
\end{enumerate}}

The way to satisfy the former is to be model-theoretically conservative over $\PA$. Being such, the theory would not discriminate among possible interpretations of our basis theory. The way to satisfy the latter is to allow for expressing "thoughts" which are not expressible in the basis theory. There are many ways in which a theory of truth can witness the expressive role of the notion of truth. To mention just two (for the rest of the examples the Reader should consult \cite{fischerhorsten}): if a theory of truth is finitely axiomatizable, then it is more expressive (than $\PA$)
and if a theory of truth has non-elementary speed-up over $\PA$, then it is more expressive (than $\PA$).
There is a canonical construction which produces a theory of truth satisfying both  finite-axiomatizability and the speed-up desideratum:
the theory has to be (at least partially) classically compositional and it has to prove
that all standard instantiations of the induction scheme are true. Without aspiring to any sort of completeness, let us offer the following explication of both properties. We start with a useful definition:
\begin{definicja}
	Let $\CC(x)$ denote the disjunction of the following formulae
	\begin{itemize}
		\item $\exists s,t\ \ \bigl(x = (s=t)\wedge (T(x)\equiv \val{s}=\val{t})\bigr)$
		\item $\exists \phi,\psi\ \ \bigl(x = \phi\vee\psi \wedge (T(x)\equiv T(\phi)\vee T(\psi))\bigr)$
		\item $\exists \phi \ \ \bigl( x=\neg\phi \wedge (T(x)\equiv \neg T(\phi))\bigr)$
		\item $\exists \phi\exists v\ \ \bigl(x = \exists v\phi \wedge (T(x)\equiv \exists y T(\phi(\num{y})))\bigr)$
		\item $\form^{\leq 1}(x) \wedge \forall s,t\Bigl( \val{s} = \val{t}\rightarrow \bigl(T(x(s))\equiv T(x(t))\bigr)\Bigr)$
	\end{itemize}
\end{definicja}

Informally, $\CC(x)$ says that $x$ is a formula on which $T$ behaves compositionally in the sense of classical first-order logic.

\begin{definicja}
	A truth theory $\Th$ is \textit{partially classically compositional} if there exists a formula $D(y)$ such that $\Th$ proves the following sentences:
	\begin{enumerate}
		\item $\forall y (D(y)\rightarrow \forall x\leq y D(x))$;
		\item $D(0)\wedge \forall y (D(y)\rightarrow D(y+1))$;
		\item $\forall y \bigl(D(y)\rightarrow \forall \phi (\dpt(\phi)\leq y \rightarrow \CC(\phi))\bigr)$;
	\end{enumerate}
	where $\dpt(\phi)\leq x$ denotes an arithmetical formula representing the (primitive recursive) relation "the \poprawka{ depth of the syntactic tree of $\phi$} is at most $x$". 
\end{definicja}
If a formula satisfies the first requirement, we say that it is \emph{downward closed}. If a formula satisfies the second one, we say that it is \emph{progressive}. If a formula $D(y)$ is both downward closed and progressive, we will say that it \textit{defines an initial segment}. This is justified, since if
$D(y)$ satisfies $1$ and $2$, then in each model $\mathcal{M}\models\Th$ the set $\set{a\in M}{\mathcal{M}\models D(a)}$ is an initial segment of the model. In fact, being \emph{downward closed} is not a very restrictive condition: if $D(y)$ is progressive, then the formula
\[D'(x):= \forall y\leq x D(y)\] 
defines an initial segment. (this corresponds to a model-theoretic fact that each \emph{cut} can be \emph{shortened} to an initial segment). The third condition says that if $\phi$ is not too complicated (i.e., its complexity belongs to the initial segment defined by $D$), then $T$ behaves classically on $\phi$.

\begin{definicja}
	Let $\ind(\phi(x))$ denote the instantiation of the induction scheme with $\phi(x)$, i.e., the universal closure of the following formula:
	\[\forall x (\phi(x)\rightarrow\phi(x+1))\longrightarrow \bigl(\phi(0)\rightarrow \forall x\phi(x)\bigr)\]
	Following our conventions, we will use $\ind(\cdot)$ to denote an arithmetical formula representing the function which, given a G\"odel code of a formula with at most one free variable, returns the G\"odel code of the corresponding
	induction axiom. 
\end{definicja}

\begin{definicja}
	A truth theory \textit{proves the truth of induction} if there exists a formula $D(y)$ such that $\Th$ proves that $D(y)$ defines an initial segment and 
	\begin{equation}\label{truth_ind}\tag{T(IND)}
	\forall \phi(v)\ \ \biggl(D(\phi(v))\rightarrow T\bigl(\ind(\phi(v))\bigr)\biggr).
	\end{equation}
\end{definicja}

We shall say that $\Th$ is finitely axiomatisable modulo $\PA$ if there is a sentence $\phi$ such that the logical consequences of $\Th$ are precisely the logical consequences of $\PA\cup\was{\phi}$. For example, $\CT^-$, $\PT^-$ and $\WPT^-$ are finitely axiomatisable modulo $\PA$.

Now we have the following theorem whose unique novelty rests on isolating the features that are usually used to prove the thesis for concrete theories of truth.

\begin{tw}
	Assume that
	\begin{enumerate}
		\item $\Th$ is partially classically compositional and proves the truth of induction and
		\item $\Th$ is finitely axiomatizable modulo $\PA$,
	\end{enumerate} 
	then $\Th$ is finitely axiomatizable and it has super-exponential speed-up over $\PA$.
\end{tw}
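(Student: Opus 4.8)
The plan is to establish the two conclusions separately, after some preprocessing. By the second hypothesis, fix a single $\Lang_T$-sentence $\chi$ with the property that $\Th$ is deductively equivalent to $\PA+\chi$, where $\PA$ denotes the $\Lang_{\PA}$-induction schema over a finite base. By the first hypothesis, fix a formula $D(y)$ witnessing both partial classical compositionality and the truth of induction (if the two definitions supply different witnesses, replace them by the intersection of the two cuts, which is again provably a downward-closed progressive segment on which both properties hold). Write $\delta_1,\delta_2,\delta_3$ for the three defining sentences of partial compositionality (downward closure, progressivity, and $\CC$ on the segment) and $\iota$ for the truth-of-induction sentence; all four are theorems of $\Th$.

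\textbf{Finite axiomatizability.} I would set $\Th_{\mathrm{fin}}:=I\Sigma_1+\chi+\delta_1+\delta_2+\delta_3+\iota$, which is finitely axiomatized. Since every axiom of $\Th_{\mathrm{fin}}$ is a theorem of $\Th$, one direction is immediate, and it remains to show $\Th_{\mathrm{fin}}\vdash\PA$, i.e. that $\Th_{\mathrm{fin}}$ proves each arithmetical instance $\ind(\phi)$ for standard $\phi\in\Lang_{\PA}$. Fix such a $\phi$ and put $n=\dpt(\ind(\phi))$, a standard number. From $\delta_2$ ($n$-fold) I get $D(\num n)$, hence by $\delta_1$ that $D$ holds of $\dpt(\phi)$ and of every subformula-depth of $\ind(\phi)$; by $\iota$ this yields $T(\ind(\phi))$, and by $\delta_3$ the predicate $T$ is compositional on all subformulae of $\ind(\phi)$. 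Because $\ind(\phi)$ is a \emph{fixed standard} formula, a finite structural induction in the metatheory (one compositional clause of $\CC$ per connective/quantifier, the atomic and extensionality clauses handling the base case, term evaluation being $\Sigma_1$) produces, inside $I\Sigma_1$, the disquotation $T(\ind(\phi))\equiv\ind(\phi)$, whence $\Th_{\mathrm{fin}}\vdash\ind(\phi)$. The only point needing care is that the ``depth'' governing $D$ must be insensitive to substituting numerals, so that $\phi(\num x)$ stays in the segment uniformly in $x$ when quantifiers are peeled; this is exactly what the atomic clause of $\CC$ (governing $s=t$ for \emph{arbitrary} terms) secures, as it makes depth count only propositional/quantifier structure. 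Thus $\Th_{\mathrm{fin}}\vdash\PA$, so $\Th_{\mathrm{fin}}\vdash\PA+\chi$ and the two theories coincide.

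\textbf{Super-exponential speed-up.} The witnessing family is $\sigma_n:=\mathrm{Con}(I\Sigma_n)$, which are $\Lang_{\PA}$-sentences provable in $\PA$ by reflexivity. The point is that $\iota$ lets $\Th$ verify these with \emph{uniformly short} proofs. For standard $n$, each (finitely many) axiom of $I\Sigma_n$ is a standard sentence lying in the segment, so by $\iota$ together with the base truth axioms it is provably $T$-true; the decisive soundness of the $I\Sigma_n$-induction axiom under the arithmetical $\Sigma_n$-satisfaction predicate $\mathrm{Sat}_n$ is obtained in one stroke from $\iota$ (the induction axiom is asserted $T$-true uniformly, not instance-by-instance) combined with $\delta_3$-disquotation identifying $T$ with $\mathrm{Sat}_n$ on the relevant bounded complexity. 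Only the numeral $\num n$ and a fixed template grow, so $\Th\vdash\mathrm{Con}(I\Sigma_n)$ by proofs of size polynomial in $n$. By contrast $I\Sigma_n\nvdash\mathrm{Con}(I\Sigma_n)$, so any $\PA$-proof must invoke induction of unbounded complexity, and by the known lower bounds on the lengths of $\PA$-proofs of fragment-consistency statements these lengths dominate every elementary function of $n$. This yields the claimed non-elementary (in particular super-exponential) speed-up.

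\textbf{Main obstacle.} The delicate half is the upper bound on the $\Th$-side: one must make the derivations of $\mathrm{Con}(I\Sigma_n)$ genuinely \emph{uniform and polynomially bounded}, which requires doing the soundness-of-derivations argument while only the $\Lang_{\PA}$-induction guaranteed by $\Th$ is available (the theory need not carry full $\Lang_T$-induction). The route is to pass to the arithmetical partial-truth predicate $\mathrm{Sat}_n$, which provably agrees with $T$ on the bounded complexity guaranteed by $\delta_3$ on the segment, and to organise the consistency argument so that the induction it uses is arithmetical rather than over a formula containing $T$. The matching lower bound for $\PA$ is not reproved here but imported from the proof-length literature.
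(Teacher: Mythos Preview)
Your finite-axiomatizability argument is essentially the paper's: both derive each arithmetical instance $\ind(\phi)$ inside a fixed finite fragment by (i) proving the cut $D$ holds at the required standard depth, (ii) applying $\iota$ to obtain $T(\ind(\phi))$, and (iii) disquoting via the compositional clauses on the segment. You add $\delta_1,\delta_2,\delta_3,\iota$ explicitly so as to work over $I\Sigma_1$, whereas the paper simply says ``for some $n$, $I\Sigma_n+\chi$ works''; this is only a presentational difference.

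Your speed-up argument, however, diverges from the paper and has a genuine gap. The paper follows the standard Pudl\'ak--Fischer route: it exhibits a cut $D'$ with $\Th\vdash\forall y\,(D'(y)\rightarrow\ConPA(y))$, where $\ConPA(y)$ is the \emph{finitary} consistency statement (``no $\PA$-proof of $0=1$ codable with $\leq y$ bits''), and then defers to \cite{fischer2}, Theorem~9. Your witnesses $\mathrm{Con}(I\Sigma_n)$ are of a different kind, and the upper bound you need---short $\Th$-proofs of $\mathrm{Con}(I\Sigma_n)$---is not secured by your sketch. Proving $\mathrm{Con}(I\Sigma_n)$ requires soundness for \emph{all} $\Sigma_n$-induction instances the model can see, including those for $\Sigma_n$-formulae of nonstandard depth (the bounded matrix of a $\Sigma_n$-formula may be arbitrarily deep); $\iota$ only covers formulae on the cut $D$, so the $T$-route does not reach them. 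Your fallback to $\mathrm{Sat}_n$ is purely arithmetical and makes no essential use of $T$ or $\iota$, so it simply reproduces the $\PA$-proof and gives no speed-up. Finally, the lower bound you invoke---that $\PA$-proofs of $\mathrm{Con}(I\Sigma_n)$ dominate every elementary function of $n$---is not, to my knowledge, a standard cited result; the well-established lower bounds in this area concern the finitary statements $\ConPA(\num{n})$, which is exactly why the paper (and Fischer) use those.
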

\begin{proof}[Sketch of the proof]
	Let $D_1(y)$ define an initial segment on which $T$ is classically compositional. Let $D_2(y)$ define an initial segment on which $\Th$ proves the truth on induction. Then $D(y): = D_1(y)\wedge D_2(y)$ defines an initial segment on which $T$ is classically compositional and proves the truth of induction. Obviously, for every standard natural number $n$ we have 
	\[\Th \vdash D(\num{n})\]
	In particular, $\Th\vdash \UTB^-$,
	which for every concrete formula of standard complexity can be proved by external induction on the complexity of its subformulae. Now, for every standard formula $\phi(x_0,\ldots,x_n)$, we can prove $\ind(\phi(\bar{x}))$ in $\Th$ in the following way: 
	\begin{enumerate}
		\item prove that $D$ defines an initial segment on which $T$ is classically compositional;
		\item prove that $D(\qcr{\ind(\phi(\bar{x}))})$ and conclude $D(\qcr{\phi(\bar{x})})$;
		\item prove \ref{truth_ind};
		\item using $2.$ and $3.$ conclude $T\left(\qcr{\ind(\phi(\bar{x}))}\right)$;
		\item prove $\UTB^-\left(\ind(\phi(\bar{x}))\right)$;
		\item conclude $\ind\left(\phi(\bar{x})\right)$.
	\end{enumerate}
	Observe that, given $1.$, the proof of $D(\qcr{\ind(\phi(\bar{x}))}$ can be constructed in pure First-Order Logic. Similarly, given $1.$, all we need to use in proving $\UTB^-\left(\ind(\phi(\bar{x}))\right)$ are some basic syntactical facts provable in $I\Sigma_1$. Let $\phi$ be a sentence such that $\PA\cup\was{\phi}$ is a finite-modulo-$\PA$ axiomatisation of $\Th$. It follows that for some $n$, every proof \dousuniecia{(every proof, czy proof of every?)} of $\ind(\phi(\bar{x}))$ can be given in $I\Sigma_n + \phi$, hence
	the theory
	\[I\Sigma_n\cup\was{\phi}\]
	is a finite axiomatization of $\Th$. To prove that $\Th$ has super-exponential speed-up over $\PA$, we show that there is a formula $D'(y)$ which provably in $\Th$ defines an initial segment and that
	\[\Th\vdash \forall y\bigl( D'(y)\rightarrow \ConPA(y)\bigr)\]
	where $\ConPA(y)$ is a finitary statement of consistency of $\PA$ saying that there is no $\PA$ proof of $0=1$ which can be coded using less than $y$ bits. For the details, see \cite{fischer2}, Theorem 9.
\end{proof}

In \cite{fischer2}, it was shown that $\PT^- + \intot$ satisfies the assumptions of the above theorem. However, as was shown in \cite{CWL}, this theory is not model-theoretically conservative over $\PA$. We shall now show that the right theory to use is $\WPT^-+\inter$.

\begin{stwierdzenie}\label{prop_wpt_spid}
	$\WPT^-$ is partially classically compositional and $\WPT^- + \inter$ proves the truth of induction.
\end{stwierdzenie}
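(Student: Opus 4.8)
The plan is to exhibit, for both parts of the proposition, a single complexity cut $D(y)$ and to argue that along this cut the weak Kleene clauses of $\WPT^-$ collapse to the classical compositional clauses captured by $\CC$. Throughout I would use that the depth measure $\dpt$ counts only the nesting of logical connectives and quantifiers, so that $\dpt(\psi(\num{x})) = \dpt(\psi(v))$ for every $x$ (substituting a numeral for a variable leaves the logical tree unchanged), and that a set $\{y : D(y)\}$ which is provably closed under successor is automatically closed under adding any fixed standard constant (apply progressivity finitely many times).

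For partial classical compositionality I would take
\[
D(y) := \forall \phi\bigl(\Sent_{\Lang_{\PA}}(\phi) \wedge \dpt(\phi)\leq y \rightarrow (T(\phi)\vee T(\neg\phi)) \wedge \neg(T(\phi)\wedge T(\neg\phi))\bigr),
\]
i.e. ``every sentence of depth at most $y$ is total and consistent.'' Downward closure is immediate, and $D(0)$ holds because for a closed atom $s=t$ axioms (1a)--(1b) force exactly one of $T(s=t)$, $T(\neg s=t)$. The heart of this part is progressivity $D(y)\rightarrow D(y+1)$, which I would prove not by any $\Lang_T$-induction but by a finite case analysis on the top connective of a sentence $\phi$ of depth $y+1$ --- necessarily one of $\neg\chi$, $\psi\vee\eta$, $\exists v\,\psi$ --- using that its immediate constituents (and, for the quantifier, all numerical instances $\psi(\num{x})$) have depth $\leq y$ and are therefore total and consistent by $D(y)$. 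For $\neg\chi$, totality and consistency transfer uniformly through the double negation axiom; for $\psi\vee\eta$ and $\exists v\,\psi$ the explicit $\tot(\cdot)$ conjuncts appearing in the $\WPT^-$ clauses are supplied by the totality of the constituents, after which a short propositional check (respectively a check using the witness) yields totality and consistency of $\phi$. The third requirement, $D(y)\rightarrow\forall\phi(\dpt(\phi)\leq y\rightarrow\CC(\phi))$, then falls out of the same facts: once the constituents are total, the weak clauses $T(\psi\vee\eta)\equiv\tot(\psi)\wedge\tot(\eta)\wedge(T(\psi)\vee T(\eta))$ and $T(\exists v\psi)\equiv\tot(\psi(v))\wedge\exists x\,T(\psi(\num{x}))$ reduce to the classical disjunction and existential clauses of $\CC$, while total-and-consistent of $\chi$ is exactly what is needed to derive $T(\neg\chi)\equiv\neg T(\chi)$; the extensionality clause of $\CC$ is simply axiom (5).

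For the truth of induction I would use the \emph{same} cut $D$ and read $D(\phi(v))$ as $D(\dpt(\phi(v)))$. Fix $\phi(v)$ with $\dpt(\phi)$ in the cut. Since $\ind(\phi(v))$ exceeds $\dpt(\phi)$ only by a fixed standard constant, its depth --- and the depth of each of its subformulae --- still lies in $D$, so by the third clause just established $T$ is classically compositional on all of $\ind(\phi(v))$. Unfolding $T(\ind(\phi(v)))$ through these classical clauses --- treating $\rightarrow$ as $\neg,\vee$, pushing $T$ through the quantifiers as $\forall x\,T(\phi(\num{x}))$ and $\exists x\,T(\phi(\num{x}))$, and using extensionality to identify $T(\phi(\num{x}+1))$ with $T(\phi(\num{x+1}))$ --- reduces it, after finitely many standard steps, to exactly the formula $\inter(\phi(x))$. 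But $\inter(\phi(x))$ is precisely what the axiom $\inter$ asserts for every arithmetical $\phi$, so $T(\ind(\phi(v)))$ follows, establishing (T(IND)).

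The main obstacle, I expect, is the progressivity step of part one: it must be carried out entirely inside first-order logic (we prove the single sentence $D(y)\rightarrow D(y+1)$, never $\forall y\,D(y)$), and it is here that the \emph{consistency} half of $D$ is indispensable --- it is needed only to secure the direction $T(\neg\chi)\rightarrow\neg T(\chi)$ of the classical negation clause, yet it must itself be shown to propagate through $\vee$ and $\exists$, which is where the compositional axioms of $\WPT^-$ are genuinely used. A secondary point to get right is the depth bookkeeping: the whole argument depends on $\dpt$ being invariant under substitution of numerals for variables, without which the quantifier case --- whose instances range over all of $M$ --- would escape any bounded cut.
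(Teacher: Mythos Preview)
Your treatment of partial classical compositionality is essentially the paper's: their cut $D'(y):=\forall x\leq y\,\forall\phi\bigl(\dpt(\phi)\leq x\rightarrow(T(\neg\phi)\equiv\neg T(\phi))\bigr)$ is logically the same as your ``total and consistent up to depth $y$'', and the progressivity argument is the one you sketch.

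For the truth of induction there is a real gap. You claim that $\dpt(\ind(\phi))$ exceeds $\dpt(\phi)$ by a fixed standard constant, but $\ind(\phi)$ is by definition the \emph{universal closure} of the induction formula, and the paper --- both in its proof of this proposition and in the speed-up application, which explicitly needs induction for formulae $\phi(x_0,\ldots,x_n)$ with parameters --- treats (T(IND)) as ranging over formulae with arbitrarily many free variables. If $\phi$ carries $k$ parameters the closure prepends $k$ quantifiers, so $\dpt(\ind(\phi))\approx\dpt(\phi)+k+c$ with $k$ possibly nonstandard; your depth cut does not reach it, and even if it did, commuting $T$ through those $k$ outer quantifiers is not a finite external unfolding --- you would need a single internal statement doing it in one step, and that statement must itself be shown progressive in $k$. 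This is precisely what the paper adds: it strengthens the cut to $D(y):=D'(y)\wedge\forall\phi\bigl(|\FV(\phi)|\leq y\rightarrow\GC(\phi)\bigr)$, where $\GC(\phi)$ asserts $T(\ucl(\phi))\equiv\forall\sigma\bigl(\Asn(\phi,\sigma)\rightarrow T(\phi[\sigma])\bigr)$, and proves this conjunct progressive by peeling off one outermost quantifier. Once $D(\phi)$ holds, $\GC$ dispatches the closure block in a single move; each instance $\phi(v)[\sigma]$ is then a one-variable formula whose induction sentence has depth $\dpt(\phi)+3$ --- now genuinely a fixed constant above $\dpt(\phi)$ --- and from that point your reduction to $\inter$ goes through verbatim.
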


\begin{proof}
	Let us define
	\[D'(y):= \forall x\leq y\forall \phi \biggl(\dpt(\phi)\leq x\longrightarrow \bigl(T(\neg\phi)\equiv \neg T(\phi)\bigr)\biggr).\]
	\dousuniecia{Usunąłem z powyższego różki.}
	Then it can be easily shown that $D'(y)$ provably in $\WPT^-$ defines an initial segment on which $T$ is classically compositional (that $D'(y)$ is progressive is assured by the compositional axioms of $\WPT^-$). For convenience, let us define:\footnote{\poprawka{$\GC$ stands for ''generalised commutativity.'' $\GC(x)$ expresses that the truth predicate commutes with the whole block of universal quantifiers in the universal closure of $x$.}}
	\[\GC(x):= \form(x) \wedge \bigl(T(\ucl(x))\equiv \forall \sigma (\Asn(x,\sigma)\rightarrow T(x[\sigma]))\bigr)\]
	where 
	\begin{enumerate}
		\item  $\ucl(\phi(\bar{x}))$ denotes the universal closure of $\phi(\bar{x})$;
		\item $\Asn(\phi,\sigma)$ represents the relation "$\sigma$ is an assignment for $\phi$", i.e., $\sigma$ is \poprawka{a function} defined exactly on the free variables of $\phi$;
		\item $x[\sigma]$ denotes the result of simultaneous substitution of numerals naming numbers assigned by $\sigma$ to the free variables of $x$.
	\end{enumerate} 
	Further define
	\[D(y):= D'(y)\wedge \forall \phi(\bar{x})\biggl(|\FV(\phi(\bar{x}))|\leq y\longrightarrow \GC(\phi(\bar{x}))\biggr)\]
	where, , $|\FV(\phi\bar{x})|\leq x$ represents the relation "$\phi(\bar{x})$ contains at most $x$ free variables".
	For the sake of definiteness, we assume that $\ucl(\phi)$ starts with a quantifier binding the variable with the least index among the free variables if $\phi$. It can be easily seen that $D(y)$ is downward closed. Let us now show that $D(y)$ is progressive. We work in $\WPT^-$. Fix arbitrary $a$ and suppose that $D(a)$. Then $D'(a)$ and, as $D'(y)$ is progressive, we have also $D'(a+1)$. Let us fix an arbitrary formula $\phi$ with less than $a+1$ free variables and let $v$ be its free variable with the least index. Then the following are equivalent
	\begin{enumerate}
		\item $T(\ucl(\phi))$
		\item $\forall x T(\ucl(\phi(\num{x}/v)))$
		\item $\forall x\forall \sigma \bigl(\Asn(\phi(\num{x}/v),\sigma)\rightarrow T(\phi(\num{x}/v)[\sigma])\bigr)$
		\item $\forall \sigma \bigl(\Asn(\phi,\sigma)\rightarrow T(\phi[\sigma])\bigr)$.
	\end{enumerate}  
	The equivalence between $1.$ and $2.$ is by the axiom for universal quantifier in $\WPT^-$. The equivalence between $2.$ and $3.$ holds because $\phi(\num{x}/v)$ has $\leq a$ free variables. The last equivalence holds because each assignment for $\phi$ consists of an assignment to $v$ and an assignment to the free variables of $\phi(\num{x}/v)$. 
	
	We show that $\WPT^-+\inter$ proves the truth of induction on $D(y)$. We work in  $\WPT^-+\inter$. Let us observe that for each formula $\phi$, we have $\dpt(\phi)\leq \phi$ and $|\FV(\phi)|\leq \phi$. \footnote{Being precise, this is a property of our coding. But most natural codings surely have it.}
	Hence if $D(\phi)$, then $D'(\dpt(\phi))$ and $D(|\FV(\phi)|)$. In particular, if $D(\phi)$ then $T$ is classically compositional on subformulae of $\phi$ and $T$ can successfully deal \dousuniecia{(Napisałbym, co to znaczy ,,succesfully deal'' w tym kontekście.)} with the universal closure for $\phi$.
	Let us fix an arbitrary formula $\phi(v,\bar{w})$ such that $D(\phi(v,\bar{w}))$. We have to show $T(\ind(\phi(v,\bar{w}))$, i.e.
	\begin{equation}\label{equat}
	T\biggl(\ucl\Bigl(\forall v (\phi(v)\rightarrow\phi(v+1))\longrightarrow \bigl(\phi(0)\rightarrow \forall v\phi(v)\bigr)\Bigr)\biggr)
	\end{equation}
	(we skip the reference to $\bar{w}$ and assume that they are bounded by the universal quantifiers from $\ucl$). Since the formula 
	$$\bigl(\forall v (\phi(v)\rightarrow\phi(v+1))\longrightarrow \bigl(\phi(0)\rightarrow \forall v\phi(v)\bigr)\bigr)$$
	 contains less free variables then $\phi(v)$, we know that \eqref{equat} is equivalent to 
	\begin{equation}\label{equat2}
	\forall \sigma T\Bigl(\forall v (\phi(v)[\sigma]\rightarrow\phi(v+1)[\sigma])\longrightarrow \bigl(\phi(0)[\sigma]\rightarrow \forall v\phi(v)[\sigma]\bigr)\Bigr).
	\end{equation}
	Let us fix an arbitrary $\sigma$. Then $\phi(v)[\sigma]$ is a formula with at most free variable $v$. Let us abbreviate it with $\psi(v)$. Hence it is enough to show:
	\begin{equation}\label{equat3}
	T\Bigl(\forall v (\psi(v)\rightarrow\psi(v+1))\longrightarrow \bigl(\psi(0)\rightarrow \forall v\psi(v)\bigr)\Bigr).
	\end{equation}
	Since $\dpt(\psi(v)) = \dpt(\phi(v))$ and the depth of \eqref{equat3} is equal to $\dpt(\psi(v)) + 3$, then $T$ is classically compositional on \eqref{equat3}. Hence \eqref{equat3} is equivalent to 
	\[\forall x \bigl(T(\psi(\num{x}))\rightarrow T(\psi(\num{x+1}))\bigr)\longrightarrow \bigl(T(\psi(\num{0}))\rightarrow \forall x T\psi(\num{x})\bigr)\]
	which follows by $\inter$. Hence $\WPT^-+\inter$ proves the truth of induction on $D$.
\end{proof}

Hence $\WPT^-+ \inter$ exemplifies the expressive role of truth. Let us observe that, as it contains no restriction on arithmetical formulae admissible in the axiom of internal induction, it is more natural than $\PT^-+\intot$.\footnote{In \cite{fischerhorsten} authors discuss this restriction in the context of $\PT^-$ and admit it is as a possible objection to their theory.} $\WPT^-+ \inter$ proves that all arithmetical formulae, and not only total, satisfy induction, which is clearly the idea behind $\PA$. Let us show that despite having such an expressive axiom, it is a model-theoretically conservative theory of truth.

\begin{tw}\label{tw_wpt_cons}
	$\WPT^- + \inter$ is model-theoretically conservative over $\PA$.
\end{tw}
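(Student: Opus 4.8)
The plan is to expand an arbitrary $\M \models \PA$ by taking the \emph{least} fixed point of a Weak Kleene analogue of the operator $\Gamma^{\M}$ from Definition \ref{defin_oper_theta}, and then to show that, precisely because of the totality guards in the disjunction and existential clauses of $\WPT^-$, this particular fixed point automatically validates $\inter$. Concretely, I would first define an operator $\Theta^{\WPT}_{\M}$ exactly like $\Theta_{\M}$, except that the clause for $\phi = (\psi_1 \vee \psi_2)$ now requires that $\psi_1$ and $\psi_2$ be \emph{decided} by the current set $A$ (i.e. $\psi_i \in A$ or $\neg\psi_i \in A$) in addition to one of them lying in $A$, and similarly the clause for $\phi = \exists x\, \psi$ requires $\forall x\,(\psi(\num{x}) \in A \vee \neg\psi(\num{x}) \in A)$ together with $\exists x\, \psi(\num{x}) \in A$. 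Since the totality guard $\tot$ is positive in $A$, the resulting operator is monotone; its iterations $\Gamma^{\WPT}_{\alpha}$ reach a least fixed point $T$, and by the same verification as in Definition \ref{defin_oper_theta} any fixed point of $\Theta^{\WPT}_{\M}$ gives $(\M, T) \models \WPT^-$. (Plain conservativity of $\WPT^-$ is already guaranteed by the Proposition recorded earlier; the work here is in controlling \emph{which} $T$ we choose.)

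The central lemma --- and the main obstacle --- is a structural description of this least fixed point: a sentence $\phi$ is \emph{decided} by $T$ (i.e. $\phi \in T$ or $\neg\phi \in T$) if and only if its logical complexity, meaning the number of nested connectives and quantifiers and \emph{ignoring} term depth, is standard, and in that case $T$ agrees with ordinary arithmetical truth. One direction is by induction on the stage: everything entering $\Gamma^{\WPT}_{n}$ has logical complexity at most $n$. For the converse, every standard-complexity sentence is total, by induction on complexity, the base case covering arbitrary closed (possibly nonstandard) terms via the clause $\val{s} = \val{t}$. The crux is that $\Gamma^{\WPT}_{\omega}$ is \emph{already} a fixed point: a sentence could be added at stage $\omega + 1$ only if all of its immediate subformula-instances were decided at some finite stage, but all instances $\psi(\num{x})$ of one fixed formula share a single logical complexity and are therefore decided at one \emph{uniform} finite stage, so nothing new appears. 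This is exactly where Weak Kleene parts ways with Strong Kleene: the totality guard forces groundedness to propagate through all substitution instances simultaneously, so no sentence of nonstandard complexity is ever decided. By contrast, the stopping-condition disjunctions of Lemma \ref{lem_total_stopping_disjunction} produce \emph{total} sentences of nonstandard complexity under $\PT^-$, which is precisely what drives the nonconservativity of $\PTtot$.

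Finally I would read off $\inter$. Fix a coded formula $\phi(v)$ and assume the progressivity premise $\forall x\,(T(\phi(\num{x})) \rightarrow T(\phi(\num{x+1})))$ together with $T(\phi(\num{0}))$; the other cases of the implication are vacuous. From $T(\phi(\num{0}))$ and the structural lemma, $\phi(v)$ has some \emph{standard} logical complexity $k$, since substituting a numeral for $v$ does not change logical complexity. Hence every instance $\phi(\num{x})$ is total, and $T(\phi(\num{x}))$ holds exactly when $\phi(\num{x})$ is classically true. For a fixed standard $k$, $\PA$ possesses an arithmetical partial satisfaction predicate $\textnormal{Sat}_k$ for formulae of logical complexity at most $k$, so the set $\set{x}{T(\phi(\num{x}))}$ coincides with the $\Lang_{\PA}$-definable set $\set{x}{\textnormal{Sat}_k(\phi(\num{x}))}$ (with parameter $\phi$). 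This definable set contains $0$ and is closed under successor, so by the full induction available in $\M \models \PA$ it is all of $\M$; transferring back through the pointwise equivalence gives $\forall x\, T(\phi(\num{x}))$. As $\phi(v)$ was arbitrary we obtain $(\M, T) \models \inter$, and since $\M$ was arbitrary, $\WPT^- + \inter$ is model-theoretically conservative over $\PA$.
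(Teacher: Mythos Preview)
Your proposal is correct and arrives at essentially the same truth set as the paper, but by a genuinely different route. The paper defines the interpretation of $T$ \emph{directly}: $Tr'$ consists of the true substitution instances $\phi(t_0,\ldots,t_n)$ of \emph{standard} arithmetical formulae $\phi$ (with the $t_i$ arbitrary terms of $\M$), and $Tr$ is the closure of $Tr'$ under $\alpha$-equivalence; the verification of the $\WPT^-$ axioms and of $\inter$ then reduces to the observation that $(\M,Tr)\models\tot(x)$ precisely when $x$ has standard syntactic depth, together with ordinary induction in $\M$ applied to the standard formula witnessing $\UTB^-$. You instead build the interpretation as the least fixed point of the Weak Kleene analogue of $\Gamma^{\M}$, prove the structural lemma that decidedness in this fixed point coincides with having standard logical depth, and then invoke the depth-$k$ partial truth predicates of $\PA$ to carry the induction. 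The two interpretations coincide: a sentence has standard logical depth iff it is $\alpha$-equivalent to a substitution instance of a standard formula. What the paper's approach buys is brevity and elementarity---no operator, no ordinal stages, no appeal to partial satisfaction predicates. What your approach buys is a sharper conceptual picture: it shows that for Weak Kleene the fixed point closes at $\omega$ over \emph{every} model of $\PA$, in explicit contrast with Lemma \ref{lemat_recsat_to_omega} and its converse for Strong Kleene, thereby locating the exact point at which the totality guards in $\WPT^-$ block the stopping-condition disjunctions of Lemma \ref{lem_total_stopping_disjunction} and hence the failure of conservativity.
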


\begin{proof}
	Let $\mathcal{M}\models \PA$. For $b\in M$ let $b\in Tr'$ if and only if for some $t_0,\ldots, t_n$ such that 
	\[ \mathcal{M}\models\bigwedge_{i\leq n}\term(t_i) \] 
	and some (standard!) $\phi(x_0,\ldots, x_n)\in\mathcal{L}_{\PA}$ 
	\[\mathcal{M}\models \bigl(b = \qcr{\phi(t_0,\ldots, t_n)}\bigr) \wedge \phi(\val{t_0},\ldots,\val{t_n})\]
	Let us observe that with such a definition, we have 
	\[(\mathcal{M}, Tr')\models \UTB^-\]
	To become the appropriate interpretation of $\WPT^-$ truth predicate, $Tr'$ requires only one small correction. Let $\sim_{\alpha}$ denote the arithmetical formula representing in $\PA$ the relation of two \emph{sentences} being the same modulo  \poprawka{renaming}  variables ($\alpha$-conversion).
	Let us define
	\[b \in Tr\]
	if and only if for some $\psi\in Tr'$, $\mathcal{M}\models b\sim_{\alpha}\psi$. Now it can be easily shown that
	\[(\mathcal{M},Tr)\models \WPT^- + \inter.\]
	Indeed, compositional axioms are satisfied, since for every $x\in M$ such that $\mathcal{M}\models \form^{\leq 1}(x)$
	\begin{equation}\label{cond}\tag{$*$}
	(\mathcal{M}, Tr)\models \tot(x) \textnormal{ if and only if for some } n \in\omega\text{, } \mathcal{M}\models \dpt(x)\leq n 
	\end{equation}
	and moreover $(\mathcal{M}, Tr)\models \UTB^-$. Hence in verifying compositional axioms we may use the fact that $\models$ is compositional. Let us check the axiom for $\vee$. Suppose $\phi = \psi\vee\theta$ and $(\mathcal{M}, Tr)\models T(\phi)$. Then there exists $\phi'\sim_{\alpha} \phi$ such that $T(\phi')$ and $\phi' =  \qcr{\phi''(t_0,\ldots,t_n)}$ for some standard $\mathcal{L}_{\PA}$ formula $\phi''(x_0,\ldots,x_n)$ and $t_0,\ldots,t_n$ terms in the sense of $\mathcal{M}$. If so, then $\phi' = \psi'\vee \theta'$ such that $\psi\sim_{\alpha}\psi'$ and $\theta\sim_{\alpha}\theta'$. Also $\psi'$ and $\theta'$ are of the form $\psi''(t_0,\ldots,t_n)$ and $\theta''(t_0,\ldots, t_n)$, respectively. By $\UTB^-$, we have 
	\[\mathcal{M}\models \psi''(\val{t_0},\ldots,\val{t_n})\vee \theta''(\val{t_0},\ldots,\val{t_n})\]
	Without loss of generality, assume that $\mathcal{M}\models \psi''(\val{t_0},\ldots, \val{t_n})$. It means that $(\mathcal{M},Tr)\models T(\psi)$ and consequently $(\mathcal{M}, Tr)\models T(\psi)\vee T(\theta)$. By \eqref{cond}, we have 
	\[(\mathcal{M}, Tr)\models \tot(\psi) \wedge \tot(\theta) \wedge\bigl(T(\psi)\vee T(\theta)\bigr).\]
	which completes the proof of one implication. Let us now assume that the above holds. Since we have $\tot(\psi)$ and $\tot(\phi)$, it follows that for some $n$,$k$,
	\[\mathcal{M}\models \dpt(\psi)\leq n \wedge \dpt(\theta)\leq k.\] 
	In particular, $\dpt(\phi) \leq \max\was{n,k}+1$. Let us assume that $(\mathcal{M},Tr)\models T(\psi)$. Let $\psi\sim_{\alpha} \psi'$ and $\theta\sim_{\alpha}\theta'$ be such that $(\mathcal{M},Tr')\models T(\psi')$.
	Reasoning as previously, we conclude that $(\mathcal{M},Tr')\models T(\psi'\vee \phi')$ 
	and hence 
	\[(\mathcal{M}, Tr)\models T(\psi\vee\phi)\]
	which completes the proof of the compositional axiom for $\vee$.
	
	Let us now verify that $(\mathcal{M}, Tr)\models \inter$. Fix an arbitrary formula $\phi(x)$ in the sense of $\mathcal{M}$ and assume that
	\[(\mathcal{M}, Tr)\models T(\phi(0))\wedge \forall x \bigl(T(\phi(\num{x}))\rightarrow T(\phi(\num{x+1}))\bigr)\]
	It follows that $\mathcal{M}\models \dpt(\phi(x))\leq n$ for some $n\in\omega$ and for some standard 
	\[\phi_0'(x,y_0,\ldots,y_{k}), \phi_1'(x,y_0,\ldots,y_{k}), \phi_2'(x,y_0,\ldots,y_{k}),\] 
	we have
	\[\mathcal{M}\models \phi(x)\sim_{\alpha} \qcr{\phi_i'(x,t_0,\ldots,t_k)}\]
	for some terms in $t_0,\ldots, t_{k}$ in the sense of $\mathcal{M}$ and for $i\leq 2$. In particular, by $\UTB^-$ we have
	\[\mathcal{M}\models \phi'_0(0,\val{t_0},\ldots, \val{t_k}) \wedge \forall x \bigl(\phi'_1(x,\val{t_0},\ldots,\val{t_n})\rightarrow \phi'_2(x+1,\val{t_0},\ldots,\val{t_n})\bigr).\]
	But as satisfiability in a model is closed under $\alpha$-conversion and each two of $\phi'_0$, $\phi_1'$, $\phi_2'$ are $\alpha$-equivalent, we get that 
	\[\mathcal{M}\models \phi'_0(0,\val{t_0},\ldots, \val{t_k}) \wedge \forall x \bigl(\phi'_0(x,\val{t_0},\ldots,\val{t_n})\rightarrow \phi'_0(x+1,\val{t_0},\ldots,\val{t_n})\bigr)\]
	Hence, by induction in $\mathcal{M}$ we get
	\[\mathcal{M}\models\forall x\text{ } \phi'_0(x,\val{t_0},\ldots,\val{t_n})\]
	which by $\UTB^-$ again gives us $(\mathcal{M}, Tr')\models \forall x T(\phi'_0(\num{x},t_0,\ldots,t_k))$.
	Hence also 
	\[(\mathcal{M}, Tr)\models \forall x\text{ } T(\phi(\num{x},t_0,\ldots, t_k)),\]
	which ends the proof.
\end{proof}

In order to find a theory satisfying the Fischer-Horsten criterion, we decided to switch the inner logic of the truth theory. It allowed to formulate a very natural theory of truth modelled after Weak Kleene Scheme. Is it possible to realise Fischer-Horsten desiderata using a compositional theory of truth extending $\PT^-$? With the meaning we gave to the term ''axiomatic theory of truth'', we are not allowed to add more symbols to the language.\footnote{Without this restriction the answer is trivial: simply take $\PT^-$ together  with $(\WPT^- + \inter)$ but formulated with a different truth predicate symbol.} For the moment, we leave it as an open problem.

\bibliographystyle{plain}
\bibliography{MPTbib}


\end{document}